\numberwithin{equation}{section}
\newtheorem{theorem}{Theorem}[section]
\newtheorem{lemma}[theorem]{Lemma}
\newtheorem{proposition}[theorem]{Proposition}
\newtheorem{corollary}[theorem]{Corollary}
\theoremstyle{remark} 
\newtheorem{remark}[theorem]{Remark}
\theoremstyle{definition} 
\newtheorem{definition}[theorem]{Definition}
\title[Concave-convex nonlinearities]{Concave-convex nonlinearities
for some nonlinear fractional equations involving the Bessel operator}
\author{Simone Secchi} 
\address{Dipartimento di Matematica e
Applicazioni, Universit\`a degli Studi di Milano Bicocca, via Cozzi
55, 20125 Milano, Italy} 
\email{Simone.Secchi@unimib.it}
\date{\today}
\keywords{Fractional Sobolev Spaces, Bessel spaces, Fractional Laplacian} 
\subjclass[2010]{35J60,35Q55,35S05}
\begin{document}

\begin{abstract} 
	We prove some existence results for a class of
nonlinear fractional equations driven by a nonlocal operator.
\end{abstract}
\maketitle

\tableofcontents

\section{Introduction}

In this paper we provide some existence results for a class of
nonlinear fractional equations of the form
\begin{equation} \label{eq:1} \left( I-\Delta \right)^{\alpha} u +
\lambda V(x) u= f(x,u)+\mu \xi(x)|u|^{p-2}u \quad \text{in
$\mathbb{R}^N$}
\end{equation} 
where $0<\alpha <1$, $V \colon \mathbb{R}^N \to
\mathbb{R}$, $f \colon \mathbb{R}^N \times \mathbb{R} \to \mathbb{R}$
are continuous functions, $\xi \colon \mathbb{R}^N \to (0,+\infty)$
belongs to $L^{2/(2-p)}(\mathbb{R}^N)$, $\lambda >0$, $\mu >0$ and
$1<p<2$.

The operator $(I-\Delta)^\alpha$ is related to the pseudo-relativistic
Schr\"odinger operator $(m^2-\Delta)^{1/2} - m$ ($m > 0$) and recently
a lot of attention is paid to equations involving it.  We refer to
\cites{A-15,A-16-1,A-16-2,CingolaniSecchi1,CingolaniSecchi2,CZN1,CZN2,Fall,FelmerVergara,Se,Tan}
and references therein for more details and physical context of
$(1-\Delta)^\alpha$.  In these papers, the authors study the existence
of nontrivial solution and infinitely many solutions for the equations
with $(m^2 I - \Delta)^\alpha$ and various nonlinearities. We remark
that the hardest issue in dealing with this operator is the lack of
scaling properties: there is no standard group action under which
$(I-\Delta)^\alpha$ behaves as a local differential operator.

In particular, Felmer \emph{et al.} proved in \cite{FelmerVergara} prove the existence of positive solution of $(I-\Delta)^\alpha u = f(x,u)$
under the following conditions on $f(x,s)$: 

\begin{itemize}
\item[(i)] $f \in C(\mathbb{R}^N  \times \mathbb{R}, \mathbb{R})$.  
\item[(ii)] For all $x \in \mathbb{R}^N$, $f(x,s) \geq 0$ if $s \geq 0$ and $f(x,s) = 0$ if $s \leq 0$. 
\item[(iii)] The function $s \mapsto s^{-1}f(x,s)$ is increasing in $(0,\infty)$ for all $x \in \mathbb{R}^N $. 
\item[(iv)] There are $1 < p < 2_\alpha^\ast -1 = (N+2\alpha) / (N-2\alpha)$ and $C>0$ such that 
$|f(x,s) | \leq C |s|^{p}$ for all $(x,s) \in \mathbb{R}^N \times \mathbb{R}$. 
\item[(v)] There exists a $\mu > 2$ such that 
$0 < \mu F(x,s) \leq s f(x,s)$ for all $(x,s) \in \mathbb{R}^N \times (0,\infty)$ where 
$F(x,s) := \int_0^s f(x,t) \, d t$. 
\item[(vi)] There exist continuous functions $\bar{f}(s)$ and $a(x)$ such that 
$\bar{f}$ satisfies (i)--(v) and $0 \leq f(x,s) - \bar{f}(s) \leq a(x) 
(|s| + |s|^p)$ for all $(x,s) \in \mathbb{R}^N  \times \mathbb{R}$, $a(x) \to 0$ as $|x| \to \infty$ 
and 
\[
\mathcal{L}^N( \{ x \in \mathbb{R}^N  \ |\  \text{$f(x,s) > \bar{f}(s)$ for all $s>0$}  \} ) > 0.
\]
\end{itemize}
Earlier this year, Ikoma proved in \cite{Ikoma} the existence of a solution to the equation $(I-\Delta)^\alpha u = f(u)$ under the so-called Berestycki-Lions assumptions on the nonlinerity $f$. As a consequence, Ikoma proved also the existence of a solution to the non-autonomous equation $(I-\Delta)^\alpha u = f(x,u)$ with $f(x,s) = - V(x) s + g(x,s)$ and suitable assumptions on $V$ and $g$.

\medskip

We continue the investigation initiated in \cite{Se}, and we present
two existence results. We fix our standing assumptions on the nonlinear term $f$:
\begin{itemize}
	\item[(f1)] $|f(x,u)| \leq c ( 1+|u|^{q-1})$ for some $q
\in (2,2_\alpha^*)$, where $2_\alpha^* = 2N/(N-2\alpha)$;
	\item[(f2)] $f(x,u)=o(|u|)$ as $u \to 0$ uniformly with
respect to $x \in \mathbb{R}^N$;
	\item[(f3)] there exists a constant $\vartheta > 2$ such that
$0 < \vartheta F(x,u) \leq u f(x,u)$ for every $x \in
\mathbb{R}^N$ and $u \neq 0$, where $F(x,u)=\int_{0}^{u} f(x,s)\, ds$.
\end{itemize}
 The first, Theorem \ref{th:2.3}, shows that
\eqref{eq:1} possesses, for every $\lambda>0$ and $\mu>0$, at least a
nontrivial solution if $V$ has some coercivity property. The second,
Theorem \ref{th:3.1}, shows that \eqref{eq:1} possesses at least two
nontrivial solutions if $\lambda>0$ is sufficiently large and $\mu>0$
is sufficiently small, but without any coercivity on the potential
$V$. Our present assumptions are weaker than those in \cite{Se}, and
more general models are allowed.

In both cases we use variational methods to perform our analysis. The
fact that \eqref{eq:1} is set in the whole $\mathbb{R}^N$ introduces a
natural lack of compactness that must be overcome before applying
Critical Point Theory.

\bigskip


\begin{minipage}{4in} \textbf{Notation}
	\begin{enumerate}
        \item The letters $c$ and $C$ will stand for a generic positive
          constant that may vary from line to line.
        \item The operator $D$ will be reserved for the (Fr\'{e}chet)
          derivative.
        \item The symbol $\mathcal{L}^N$ will be reserved for the
          Lebesgue $N$-dimensional measure.
        \item The Fourier transform of a function $f$ will be denoted
          by $\mathcal{F}u$.
        \item For a real-valued function $V$, we use the notation $V^b = \left\{ x \in \mathbb{R}^N \mid V(x)<b \right\}$.
	\end{enumerate}
\end{minipage}

\section{Preliminaries and functional setting}
For
$\alpha>0$ we introduce the \emph{Bessel function space}
\[ 
L^{\alpha,2}(\mathbb{R}^N) = \left\{ f \colon f=G_\alpha \star g \
\text{for some $g \in L^2(\mathbb{R}^N)$} \right\},
\] 
where the Bessel convolution kernel is defined by
\begin{equation*} 
G_\alpha (x) = \frac{1}{(4 \pi )^{\alpha
/2}\Gamma(\alpha/2)} \int_0^\infty \exp \left( -\frac{\pi}{t} |x|^2
\right) \exp \left( -\frac{t}{4\pi} \right) t^{\frac{\alpha - N}{2}-1}
\, dt
\end{equation*} 
The norm of this Bessel space is $\|f\| = \|g\|_2$ if
$f=G_\alpha \star g$. The operator $(I-\Delta)^{-\alpha} u = G_{2\alpha}
\star u$ is usually called Bessel operator of order $\alpha$.

In Fourier variables the same operator reads
\begin{equation*} 
G_\alpha = \mathcal{F}^{-1} \circ
\left( \left(1+|\xi|^2 \right)^{-\alpha /2} \circ \mathcal{F} \right),
\end{equation*} 
so that
\[ 
\|f\| = \left\| (I-\Delta)^{\alpha /2} f \right\|_2.
\] 
For more detailed information, see \cites{Adams, Stein} and the
references therein.

In the paper \cite{Fall} the pointwise formula
\begin{equation*} 
(I-\Delta)^\alpha u(x) = c_{N,\alpha}
\operatorname{P.V.} \int_{\mathbb{R}^N}
\frac{u(x)-u(y)}{|x-y|^{\frac{N+2\alpha}{2}}}
K_{\frac{N+2\alpha}{2}}(|x-y|) \, dy + u(x)
\end{equation*} 
was derived for functions $u \in C_c^2(\mathbb{R}^N)$.
Here $c_{N,\alpha}$ is a positive constant depending only on $N$ and
$\alpha$, P.V. denotes the principal value of the singular integral,
and $K_\nu$ is the modified Bessel function of the second kind with
order $\nu$ (see \cite{Fall}*{Remark 7.3} for more details).  However
a closed formula for $K_\nu$ is not known.

\smallskip

We summarize the embedding properties of Bessel spaces. For the proofs
we refer to \cite{Felmer}*{Theorem 3.1}, \cite{Stein}*{Chapter V,
Section 3} and \cite{Strichartz}*{Section 4}.

\begin{theorem}
	\begin{enumerate}
		\item $L^{\alpha,2}(\mathbb{R}^N) =
W^{\alpha,2}(\mathbb{R}^N) = H^\alpha (\mathbb{R}^N)$.
		\item If $\alpha \geq 0$ and $2 \leq q \leq
2_\alpha^*=2N/(N-2\alpha)$, then $L^{\alpha,2}(\mathbb{R}^N)$ is
continuously embedded into $L^q(\mathbb{R}^N)$; if $2 \leq q <
2_\alpha^*$ then the embedding is locally compact.
		\item Assume that $0 \leq \alpha \leq 2$ and $\alpha >
N/2$. If $\alpha -N/2 >1$ and $0< \mu \leq \alpha - N/2-1$, then
$L^{\alpha,2}(\mathbb{R}^N)$ is continuously embedded into
$C^{1,\mu}(\mathbb{R}^N)$. If $\alpha -N/2 <1$ and $0 < \mu \leq
\alpha -N/2$, then $L^{\alpha,2}(\mathbb{R}^N)$ is continuously
embedded into $C^{0,\mu}(\mathbb{R}^N)$.
	\end{enumerate}
\end{theorem}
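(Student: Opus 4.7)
The plan is to split the three parts and handle them by Fourier-side computations together with standard tools from harmonic analysis.

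For part (1) I would work on the Fourier side. By definition, $f \in L^{\alpha,2}(\mathbb{R}^N)$ means $f = G_\alpha \star g$ for some $g \in L^2$, which upon Fourier transform reads $\mathcal{F}f(\xi) = (1+|\xi|^2)^{-\alpha/2}\mathcal{F}g(\xi)$. Since $\mathcal{F}$ is an isometry on $L^2$, this is exactly the condition $(1+|\xi|^2)^{\alpha/2}\mathcal{F}f \in L^2$, which is the standard definition of $H^\alpha(\mathbb{R}^N)$, and the norms agree. The identification with $W^{\alpha,2}(\mathbb{R}^N)$ for non-integer $\alpha$ follows from the Plancherel-based computation that the Gagliardo seminorm equals, up to a constant depending only on $N$ and $\alpha$, the homogeneous seminorm $\||\xi|^\alpha \mathcal{F}f\|_2$, and for integer $\alpha$ it reduces to the classical equivalence between Bessel potentials and usual Sobolev spaces via Mikhlin multipliers.

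For part (2) the continuous embedding $L^{\alpha,2}\hookrightarrow L^q$ for $2\le q\le 2_\alpha^*$ can be obtained by interpolation between the trivial case $q=2$ and the endpoint $q=2_\alpha^*$. At the endpoint I would write $f = G_\alpha \star g$ and use that $G_\alpha$ is pointwise dominated by the Riesz kernel $|x|^{\alpha-N}$ at the origin (with fast decay at infinity), so Hardy--Littlewood--Sobolev yields $\|f\|_{2_\alpha^*}\le C\|g\|_2$. For the local compactness when $q<2_\alpha^*$, one combines the continuous embedding with the Riesz--Fréchet--Kolmogorov criterion: bounded sequences in $L^{\alpha,2}$ restrict to sets which are bounded in $H^\alpha(\Omega)$ for $\Omega$ bounded, and translation continuity of $\mathcal{F}f$ in $L^2$ together with the $L^q$ bound on relatively compact sets gives equi-integrability and translation smallness, yielding a convergent subsequence in $L^q_{\mathrm{loc}}$.

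For part (3) the approach is again via the integral representation $f = G_{\alpha}\star g$ with $g\in L^2$. When $\alpha > N/2$, Cauchy--Schwarz gives $|f(x)|\le \|G_\alpha\|_2 \|g\|_2$, so $f$ is bounded. To obtain Hölder continuity, I would estimate $|f(x)-f(y)|\le \|G_\alpha(\cdot-x)-G_\alpha(\cdot-y)\|_2 \|g\|_2$ and exploit the known behavior of $G_\alpha$: near the origin $G_\alpha(x)\sim c|x|^{\alpha-N}$ (for $\alpha<N$) and $G_\alpha\in C^\infty$ away from $0$ with exponential decay at infinity. A direct computation of the $L^2$ modulus of continuity of $G_\alpha$ then gives a bound $|x-y|^{\alpha - N/2}$ up to the range $\mu\le \alpha-N/2$, and once $\alpha-N/2>1$ the same argument can be applied to $\nabla G_\alpha$ to produce $C^{1,\mu}$ regularity with $\mu \le \alpha - N/2 - 1$.

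The main technical obstacle is the pointwise control of $G_\alpha$ and $\nabla G_\alpha$ needed in parts (2) and (3): one has to carefully extract from the integral formula defining $G_\alpha$ the singularity of order $|x|^{\alpha-N}$ at the origin and the exponential decay at infinity, as well as the analogous bound for the gradient. Since the statement is quoted from \cite{Felmer}, \cite{Stein} and \cite{Strichartz}, I would in practice refer to those sources rather than reproving the kernel estimates from scratch.
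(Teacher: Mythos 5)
The paper offers no proof of this theorem: it is stated as a summary of known embedding properties and the reader is simply referred to \cite{Felmer}*{Theorem 3.1}, \cite{Stein}*{Chapter V, Section 3} and \cite{Strichartz}*{Section 4}. Your sketch reproduces, in outline, exactly the standard arguments contained in those sources (Plancherel identification of $L^{\alpha,2}$ with $H^\alpha$, Hardy--Littlewood--Sobolev at the endpoint via the bound $G_\alpha(x)\lesssim |x|^{\alpha-N}$ near the origin with exponential decay at infinity, Riesz--Fr\'echet--Kolmogorov for local compactness, and the $L^2$ modulus of continuity of the kernel for the H\"older embeddings), so it is correct in outline and consistent with what the paper delegates to the literature. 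Two minor points: in part (2) the translation-smallness input to the Riesz--Fr\'echet--Kolmogorov criterion should be the continuity of translations on $f$ itself, i.e.\ $\|f(\cdot+h)-f\|_{L^2}\le \bigl\|(e^{\mathrm{i}h\cdot\xi}-1)\mathcal{F}f\bigr\|_{L^2}\lesssim |h|^{\min(\alpha,1)}\|f\|_{L^{\alpha,2}}$, not of $\mathcal{F}f$ as written; and in part (3) the estimate $\|G_\alpha(\cdot-x)-G_\alpha(\cdot-y)\|_{2}\lesssim |x-y|^{\alpha-N/2}$ is obtained more painlessly on the Fourier side from $\bigl\|(e^{\mathrm{i}(x-y)\cdot\xi}-1)(1+|\xi|^2)^{-\alpha/2}\bigr\|_{L^2}$ than from the singular-integral representation of the kernel.
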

\begin{remark} 
	Although the Bessel space $L^{\alpha,2}(\mathbb{R}^N)$
is topologically undistinguishable from the So\-bo\-lev fractional space
$H^\alpha(\mathbb{R}^N)$, we will not confuse them, since our equation
involves the Bessel norm.
\end{remark}
The next result will be useful in the sequel. We define
\begin{equation*} 
\mathscr{F}(x,u) = \frac{1}{2} f(x,u)
- F(x,u)
\end{equation*} 
for $(x,u) \in \mathbb{R} \times \mathbb{R}$.
\begin{lemma} \label{lem:1.1} 
	For every $\tau \in \left( \max \left\{
1, \frac{N}{2\alpha} \right\},\frac{q}{q-2} \right)$ there exists $R >
0$ such that $|u| \geqslant R$ implies
	\[ \frac{|f(x,u)|^\tau}{|u|^\tau} \leq \mathscr{F}(x,u).
	\]
\end{lemma}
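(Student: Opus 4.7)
The plan is to exploit the Ambrosetti--Rabinowitz condition (f3) to bound $\mathscr{F}(x,u) = \frac{1}{2}uf(x,u) - F(x,u)$ from below by a constant multiple of $|u|\,|f(x,u)|$, and then use the subcritical growth (f1) to reduce the claim to a pure power comparison in $|u|$.

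First, from (f3) we have $F(x,u)\leq \vartheta^{-1}\,uf(x,u)$ for every $u\neq 0$, whence
\[
\mathscr{F}(x,u) \geq \left(\tfrac{1}{2}-\tfrac{1}{\vartheta}\right)uf(x,u) = \tfrac{\vartheta-2}{2\vartheta}\,uf(x,u).
\]
Moreover (f3) forces $u$ and $f(x,u)$ to share the same sign (since $uf(x,u)\geq \vartheta F(x,u)>0$ for $u\neq 0$), so $uf(x,u)=|u|\,|f(x,u)|$. It therefore suffices to prove the stronger inequality
\[
\frac{|f(x,u)|^\tau}{|u|^\tau} \leq \tfrac{\vartheta-2}{2\vartheta}\,|u|\,|f(x,u)|,
\]
which, after dividing by $|f(x,u)|$ (the case $f(x,u)=0$ being trivial), is equivalent to
\[
|f(x,u)|^{\tau-1} \leq \tfrac{\vartheta-2}{2\vartheta}\,|u|^{\tau+1}.
\]

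The second step is to feed (f1) into the left-hand side: for $|u|\geq 1$ one has $|f(x,u)|\leq c(1+|u|^{q-1})\leq 2c\,|u|^{q-1}$, and so $|f(x,u)|^{\tau-1}\leq C\,|u|^{(q-1)(\tau-1)}$ with $C$ independent of $x$. It is then enough to verify
\[
|u|^{(\tau+1)-(q-1)(\tau-1)} = |u|^{\,q-\tau(q-2)} \geq \text{const},
\]
and the upper bound $\tau<q/(q-2)$ is exactly what makes the exponent $q-\tau(q-2)$ strictly positive; choosing $R$ large enough then kills the constant uniformly in $x$.

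I do not foresee any real obstacle: the argument is essentially bookkeeping. The only point requiring care is the algebraic simplification yielding the exponent $q-\tau(q-2)$, which makes transparent why $\tau<q/(q-2)$ is the sharp upper threshold for this statement. The lower bound $\tau>N/(2\alpha)$ imposed in the lemma plays no role here and is clearly inserted for later use, presumably in a Vitali-type compactness or $L^\tau$ embedding argument downstream.
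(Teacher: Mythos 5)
Your proof is correct and follows essentially the same route as the paper's: both use (f3) to bound $\mathscr{F}(x,u)$ from below by $\left(\frac{1}{2}-\frac{1}{\vartheta}\right)uf(x,u)$ and then reduce the claim to the power comparison $|f(x,u)|^{\tau-1}\leq C|u|^{(q-1)(\tau-1)}\leq \left(\frac{1}{2}-\frac{1}{\vartheta}\right)|u|^{\tau+1}$ for $|u|$ large, which holds precisely because $\tau<q/(q-2)$ is equivalent to $\tau+1>(\tau-1)(q-1)$. Your extra remarks (that $uf(x,u)=|u|\,|f(x,u)|$ by (f3), the explicit exponent $q-\tau(q-2)$, and that the lower bound on $\tau$ is only needed downstream) are accurate refinements of the same argument.
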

\begin{proof} 
	It follows from (f1)--(f3) that $|f(x,u)| \leq C
|u|^{q-1}$ if $|u|$ is large enough. Now,
\begin{align*}
F(x,u) \leq \frac{1}{\vartheta} u f(x,u) + \frac{1}{2} u f(x,u) - \frac{1}{2}u f(x,u),
\end{align*}
or
\begin{align*}
\left( \frac{1}{2} - \frac{1}{\vartheta} \right) u f(x,u) \leq \mathscr{F}(x,u).
\end{align*}
We claim that, for $|u|$ large enough, 
\begin{align*}
\frac{|f(x,u)|^\tau}{|u|^\tau} \leq \left( \frac{1}{2} - \frac{1}{\vartheta} \right) u f(x,u) .
\end{align*}
It suffices to prove that
\begin{align*}
\frac{|f(x,u)|^{\tau-1}}{|u|^{\tau+1}} \leq \frac{1}{2} - \frac{1}{\vartheta};
\end{align*}
recalling that $|f(x,u)|^{\tau-1} \leq C^{\tau-1} |u|^{(q-1)(\tau-1)}$ for $|u|$ large enough, we deduce that
\[
\frac{|f(x,u)|^{\tau-1}}{|u|^{\tau+1}}  \leq C^{\tau-1} \frac{|u|^{(\tau-1)(q-1)}}{|u|^{\tau+1}} \leq  \frac{1}{2} - \frac{1}{\vartheta} 
\]
for $|u|$ large, since $\tau+1 > (\tau-1)(q-1)$.
\end{proof}

\section{Coercive electric potentials}

In this section we always deal with a \emph{fixed} value of
$\lambda>0$. The lack of compactness of the variational problem
associated to \eqref{eq:1} will be overcome by the following
assumption on the potential $V$.
\begin{definition} 
	We say that $V \colon \mathbb{R}^N \to \mathbb{R}$
is a coercive electric potential if
\begin{itemize}
		\item[(V1)] $\operatorname{ess\,inf}_{x \in
\mathbb{R}^N} V(x) > 0$;
		\item[(V2)] $\lim\limits_{|y| \to +\infty}
\int_{B(y,1)} \frac{dx}{V(x)}=0$, where $B(y,1) = \left\{ x \in
\mathbb{R}^N \mid |y-x|<1 \right\}$.
\end{itemize}
\end{definition} 
The term \emph{coercive} has been used because the
usual coercivity condition
\[ 
\lim_{|x| \to +\infty} V(x) = +\infty
\] 
immediately implies (V2).
\begin{remark} 
	Of course the choice of $B(y,1)$ is fairly arbitrary:
any ball of fixed radius $r>0$ would do the same job.
\end{remark} Define the weighted Sobolev space
\[ 
H = \left\{ u \in L^{\alpha,2}(\mathbb{R}^N) \mid
\int_{\mathbb{R}^N} V(x) |u(x)|^2 \, dx < +\infty \right\}
\] 
equipped with the norm
\begin{equation} \label{eq:norm}
\|u\|_H^2 = \int_{\mathbb{R}^N} \left| (I-\Delta)^{\alpha/2} u
\right|^2 \, dx + \int_{\mathbb{R}^N} V(x) |u(x)|^2 \, dx.
\end{equation}
Since the norm $u \mapsto \int_{\mathbb{R}^N} \left|
(I-\Delta)^{\alpha/2} u \right|^2 \, dx$ already contains the $L^2$
norm, we can allow the inequality $V > 0$ to be true up to a subset of
zero Lebesgue measure. In particular $V$ may vanish on a curve, but
not on an open set. 
Furthermore, equation \eqref{eq:norm} defines a norm even if $V$ is negative: more precisely, it is enough to assume that
\[
V(x) +\inf_{u \in L^{\alpha,2}(\mathbb{R}^N) \setminus \{0\}} \frac{\int_{\mathbb{R}^N} \left| \left( I - \Delta \right)^{\alpha/2}  u\right|^2 \, dx}{\int_{\mathbb{R}^N} |u|^2 \, dx} >0
\]
for every $x \in \mathbb{R}^N$. 
\begin{definition} \label{def:3.3} 
	We say that $u \in
L^{\alpha,2}(\mathbb{R}^N)$ is a weak solution to (\ref{eq:1}) if
	\begin{multline*} 
	\int_{\mathbb{R}^N} (I-\Delta)^{\alpha /2} u
\ (I-\Delta)^{\alpha/2} v \, dx + \int_{\mathbb{R}^N} \lambda V(x)
u(x) v(x) \, dx \\ 
= \int_{\mathbb{R}^N} f(x,u(x))v(x) \, dx +
\int_{\mathbb{R}^N} \xi(x) |u|^{p-2}uv \, dx
	\end{multline*} 
	for all $v \in L^{\alpha,2}(\mathbb{R}^N)$,
or, equivalently,
	\begin{multline*} 
	\int_{\mathbb{R}^N} (1+|\xi|^2)^\alpha
\mathcal{F}u(\xi) \mathcal{F}v(\xi) \, d\xi + \int_{\mathbb{R}^N}
\lambda V(x) u(x) v(x) \, dx \\ 
= \int_{\mathbb{R}^N} f(x,u(x))v(x) \,
dx + \int_{\mathbb{R}^N} \xi(x) |u|^{p-2}uv \, dx.
	\end{multline*}
\end{definition}
\begin{remark} 
	For a general measurable subset $\Omega$ of
$\mathbb{R}^N$, the Bessel space $L^{\alpha,2}(\Omega)$ is defined as
the set of restrictions to $\Omega$ of functions from
$L^{\alpha,2}(\mathbb{R}^N)$. This will be useful in the following
Proposition.
\end{remark}
\begin{proposition} \label{prop:2.2} 
	If $V$ is a compact electric
potential and $2\leq q<2_\alpha^*$, then the space $H$ is
compactly embedded into $L^q(\mathbb{R}^N)$.
\end{proposition}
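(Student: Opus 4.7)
The continuous embedding $H \hookrightarrow L^q(\mathbb{R}^N)$ for $2 \leq q \leq 2_\alpha^*$ is immediate from Theorem~2.1(2) combined with $\|\cdot\|_{L^{\alpha,2}} \leq \|\cdot\|_H$, so only compactness is at issue. My plan is to take a bounded sequence $\{u_n\} \subset H$, extract a subsequence with $u_n \rightharpoonup u$ weakly in $H$, and prove strong convergence $u_n \to u$ in $L^2(\mathbb{R}^N)$; since $\{u_n - u\}$ is also bounded in $L^{2_\alpha^*}$, the case of general $q \in [2, 2_\alpha^*)$ then follows by interpolation. Local strong convergence $u_n \to u$ in $L^2(B_R)$ for each fixed $R$ is free from the local compactness part of Theorem~2.1(2), so the argument reduces to establishing a uniform tail estimate
\[
\int_{B_R^c} |v|^2 \, dx \leq \eta(R) \|v\|_H^2 \qquad \text{for every } v \in H,
\]
with $\eta(R) \to 0$ as $R \to \infty$.

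To prove the tail estimate I would cover $B_R^c$ by unit balls $\{B(y_k,1)\}_k$ with bounded overlap and $|y_k| \geq R-1$. On each ball, the pointwise identity $V^{s'/s}|v|^{2s'} = (V|v|^2)^{s'/s} |v|^2$ (valid whenever $1/s + 1/s' = 1$) and two successive applications of Hölder, with the distinguished choice $s = (N+2\alpha)/(2\alpha)$, yield
\[
\int_{B(y_k,1)} |v|^2 \, dx \leq \left(\int_{B(y_k,1)} \frac{dx}{V}\right)^{a_1} \left(\int_{B(y_k,1)} V|v|^2 \, dx\right)^{a_2} \left(\int_{B(y_k,1)} |v|^{2_\alpha^*} dx\right)^{a_3}
\]
with $a_1 = a_2 = 2\alpha/(N+2\alpha)$ and $a_3 = (N-2\alpha)/(N+2\alpha)$ summing to $1$. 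I would then sum over $k$ by pulling the first factor out as $\delta(R) := \sup_{|y|\geq R-1} \int_{B(y,1)} V^{-1} dx$ (which tends to zero by (V2)), and applying a two-factor discrete Hölder with exponents $1/a_2$ and $(N+2\alpha)/N$ to the remainder. Bounded overlap converts $\sum_k \int_{B(y_k,1)} V|v|^2 \, dx$ into $\int V|v|^2 \, dx \leq \|v\|_H^2$, while a localised Sobolev inequality $\|v\|_{L^{2_\alpha^*}(B(y,1))}^2 \leq C\|v\|_{L^{\alpha,2}(B(y,2))}^2$ (uniform in $y$ by translation invariance of the Bessel norm) converts $\sum_k (\int_{B(y_k,1)} |v|^{2_\alpha^*} dx)^{2/2_\alpha^*}$ into $C\|v\|_{L^{\alpha,2}}^2 \leq C\|v\|_H^2$. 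A quick arithmetic check shows that the resulting exponent of $\|v\|_H$ collapses to $2$, giving $\eta(R) = C\,\delta(R)^{2\alpha/(N+2\alpha)}$.

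The chief obstacle is that (V2) does \emph{not} force $V^{-1}$ to be globally integrable, so a naive three-factor discrete Hölder generating $\sum_k \int_{B_k} V^{-1} dx$ is not available; the estimate closes only because the particular value $a_3 = (N-2\alpha)/(N+2\alpha)$, after the second Hölder with exponent $(N+2\alpha)/N$, produces precisely $\sum_k \|v\|_{L^{2_\alpha^*}(B_k)}^2$, which can be re-assembled into the global $L^{\alpha,2}$-norm through the localised Sobolev inequality on doubled balls. Once the tail estimate is in hand, combining it with the local compactness inside $B_R$ and the interpolation against $L^{2_\alpha^*}$ finishes the proof.
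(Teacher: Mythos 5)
Your argument is correct, but it follows a genuinely different route from the paper's. The paper does not prove a quantitative tail estimate at all: it fixes $\varepsilon>0$, splits $\int|u_n|^2 = \int V^{-1}|u_n|^2\,V\,dx$ into the region $\{V\ge 1/\varepsilon\}$ (controlled by $\varepsilon\int V|u_n|^2\le M\varepsilon$) and the sublevel set $V^{1/\varepsilon}$, observes via (V2) that $V^{1/\varepsilon}$ is ``thin at infinity'' in the sense that $\mathcal{L}^N(V^{1/\varepsilon}\cap B(x,1))\to 0$ as $|x|\to\infty$, engulfs it in an open set with the same property, and then invokes the Berger--Schechter compactness theorem \cite{MR0312241} for such domains to get strong $L^2$ convergence there. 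Your proof replaces that external compactness theorem by a self-contained uniform decay estimate $\int_{B_R^c}|v|^2\le C\,\delta(R)^{2\alpha/(N+2\alpha)}\|v\|_H^2$ with $\delta(R)=\sup_{|y|\ge R-1}\int_{B(y,1)}V^{-1}$, obtained from a three-factor H\"older on unit balls (your exponents $a_1=a_2=\tfrac{2\alpha}{N+2\alpha}$, $a_3=\tfrac{N-2\alpha}{N+2\alpha}$ do sum to $1$ and the powers of $V$ and $|v|$ recombine correctly, and the discrete H\"older with $p'=\tfrac{N+2\alpha}{N}$ makes $a_3p'=2/2_\alpha^*$ and the total power of $\|v\|_H$ collapse to $2$ as you claim); this is more in the spirit of the Bartsch--Wang/Ding--Szulkin arguments and yields an explicit modulus of decay, at the price of one step that deserves care: the bound $\sum_k\|v\|_{L^{2_\alpha^*}(B(y_k,1))}^2\le C\|v\|_{L^{\alpha,2}(\mathbb{R}^N)}^2$ is not automatic because the Bessel norm is nonlocal and not additive over a cover, so you should pass to the equivalent Gagliardo seminorm, use the intrinsic Sobolev inequality on each unit ball (with constant independent of $k$), and note that each pair $(x,y)$ belongs to boundedly many products $B_k\times B_k$; with that said explicitly, your proof closes, and like the paper you finish by interpolating between $L^2$ and $L^{2_\alpha^*}$ for $2<q<2_\alpha^*$.
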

\begin{proof} 
	In the proof we will discard the set where $V$ vanishes, 
	since it has zero measure. We follow closely \cite{MR0481616}. Let
$\{u_n\}_n$ be a sequence from $H$ such that $u_n \rightharpoonup 0$
as $n \to +\infty$. For some $M>0$, we have
	\[ 
	\int_{\mathbb{R}^N}V(x)|u(x)|^2 \, dx \leq M \quad\text{for
all $n\in \mathbb{N}$}.
	\] 
	We define
	\begin{align*} 
	\Theta_m &= \left\{ A \subset \mathbb{R}^N \mid
\text{$A$ is measurable and $\lim_{|x| \to +\infty} \mathcal{L}^N (A
\cap B(x,1))=0$} \right\}\\ 
\Theta_0 &= \left\{ \Omega \in \Theta_m
\mid \text{$\Omega$ is open}\right\}.
	\end{align*} 
	It follows from assumption (V1) that $H$ embeds
continuously into $L^{\alpha,2}(\mathbb{R}^N)$, and therefore the
restriction of $u_n$ to $\Omega$ converges weakly to zero in
$L^{\alpha,2}(\Omega)$ for any $\Omega \in \Theta_0$. We can refer to
Theorem 2.4 of \cite{MR0312241} and conclude that $(u_n)_{|\Omega} \to
0$ strongly in $L^2(\Omega)$ for every $\Omega \in \Theta_0$.
	
	Now pick any $\varepsilon>0$. We compute for all $n \in
\mathbb{N}$:
	\begin{equation*} \left\| u_n \right\|_{L^2(\mathbb{R}^N)}^{2}
= \int_{\mathbb{R}^N} \frac{1}{V(x)} \left| u_n(x) \right|^2 V(x) \,
dx \leq M \varepsilon + \int_{V^{1/\varepsilon}} \left| u_n(x)
\right|^2 \, dx,
	\end{equation*} 
	where
	\[ 
	V^{1/\varepsilon} = \left\{ x \in \mathbb{R}^N \mid V(x) <
1/\varepsilon \right\}.
	\] 
It follows from assumption (V2) that $V^{1/\varepsilon} \in
\Theta_m$. We claim that there exists an open set $\Omega_\varepsilon
\in \Theta_0$ such that $V^{1/\varepsilon} \subset \Omega_\varepsilon$. If
this is the case, we recall that $(u_n)_{|\Omega_\varepsilon}$ converges to zero strongly in $L^2(\Omega_\varepsilon)$ and conclude easily that
	\[ 
	\limsup_{n \to +\infty} \left\| u_n
\right\|_{L^2(\mathbb{R}^N)}^{2} \leq M \varepsilon +
\int_{\Omega_\varepsilon} \left| u_n(x) \right|^2 \, dx \leq (M+1)
\varepsilon.
	\] 
To prove the claim, we introduce a countable family
$\{\mathscr{O}_k\}_k$ of open sets such that $\mathscr{O}_1 = B(0,1)$,
$V^{1/\varepsilon}\cap (\overline{B(0,k+1)}\setminus B(0,k)) \subset
\mathscr{O}_{k+1}$ and
	\[ 
	\mathcal{L}^N \left( \mathscr{O}_{k+1} \setminus \left(
V^{1/\varepsilon} \cap (\overline{B(0,k+1)}\setminus B(0,k)) \right)
\right) < \frac{1}{k}.
	\] 
	We put $\Omega_\varepsilon = \bigcup_{k=1}^\infty
\mathscr{O}_k$. Now,
	\[ 
	\mathcal{L}^N (B(x,1) \cap \Omega_\varepsilon) \leq
\mathcal{L}^N( B(x,1) \cap V^{1/\varepsilon}) + \mathcal{L}^N
((\Omega_\varepsilon \setminus V^{1/\varepsilon}) \cap B(x,1)).
	\] We need to check that $\lim_{|x| \to +\infty} \mathcal{L}^N
((\Omega_\varepsilon \setminus V^{1/\varepsilon}) \cap B(x,1)) =0$. Let $x
\in \mathbb{R}^N$ and let $n_1$ be the largest integer such that $n_1
\leq |x|-1$. We deduce from the properties of
$\{\mathscr{O}_k\}_k$ that
	\[ \mathcal{L}^N ((\Omega_\varepsilon \setminus V^{1/\varepsilon})
\cap B(x,1)) \leq \mathcal{L}^N \left( \bigcup_{k=n_1+1}^{n_1+3}
\mathscr{O}_k \setminus V^{1/\varepsilon} \right) < \frac{3}{n_1} <
\frac{3}{|x|-2}.
	\] The claim is proved.
	
	So far we have shown that $H$ is compactly embedded into
$L^2(\mathbb{R}^N)$. If $q \in [2,2_\alpha^*)$, we recall that $H$ is
continuously embedded into $L^{2_\alpha^*}(\mathbb{R}^N)$ and the
compactness of the embedding into $L^q(\mathbb{R}^N)$ follows from
standard interpolation inequalities in Lebesgue spaces.
\end{proof}
\begin{remark}
We notice that $V$ is a coercive electric potential if, and only if, 
\[
\lim_{|x| \to +\infty} \mathcal{L}^N \left( B(x,1) \cap V^b \right) =0
\]
for every $b \in \mathbb{R}$. 
\end{remark}
\begin{theorem} \label{th:2.3} Assume that (f1), (f2), (f3), (V1) and
(V2) hold. For every $\lambda>0$ there exists $\mu_0>0$ such that for 
every $\mu \in (0,\mu_0)$, there exists at least a nontrivial solution
to \eqref{eq:1}.
\end{theorem}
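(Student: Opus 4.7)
The plan is to apply the Mountain Pass Theorem to the natural energy functional
\[
J_\mu(u) = \frac{1}{2}\int_{\mathbb{R}^N}\bigl|(I-\Delta)^{\alpha/2}u\bigr|^2 dx + \frac{\lambda}{2}\int_{\mathbb{R}^N} V(x)|u|^2\,dx - \int_{\mathbb{R}^N} F(x,u)\,dx - \frac{\mu}{p}\int_{\mathbb{R}^N} \xi(x)|u|^p\,dx
\]
on the Hilbert space $H$ (using \eqref{eq:norm} with $V$ replaced by $\lambda V$, which is an equivalent norm). Thanks to (f1)--(f2), the Sobolev embeddings recalled in Section 2, and H\"older's inequality applied to the concave term (exploiting $\xi \in L^{2/(2-p)}(\mathbb{R}^N)$), $J_\mu$ belongs to $C^1(H,\mathbb{R})$ and its critical points are precisely the weak solutions of \eqref{eq:1} in the sense of Definition \ref{def:3.3}.

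First I would verify the Mountain Pass geometry. From (f1)--(f2) one obtains, for every $\varepsilon>0$, the bound $|F(x,u)| \leq \varepsilon|u|^2 + C_\varepsilon|u|^q$, while H\"older gives $\int \xi |u|^p \leq \|\xi\|_{L^{2/(2-p)}}\|u\|_{L^2}^p \leq c\|u\|_H^p$. On a sphere $\|u\|_H = \rho$ this yields
\[
J_\mu(u) \geq \tfrac{1}{4}\rho^2 - C\rho^q - \tfrac{c\mu}{p}\rho^p.
\]
Since $1 < p < 2 < q$, one first fixes $\rho>0$ small enough that $\tfrac{1}{4}\rho^2 - C\rho^q > 0$, and then selects $\mu_0>0$ so small that the whole right-hand side stays strictly positive for every $\mu \in (0,\mu_0)$, giving $\inf_{\|u\|_H=\rho} J_\mu > 0$. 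For the complementary condition, (f3) forces $F(x,u) \geq c_1|u|^\vartheta - c_2$ with $\vartheta>2$, so for any fixed $\varphi \in C_c^\infty(\mathbb{R}^N)\setminus\{0\}$ the superquadratic contribution of $F$ dominates and $J_\mu(t\varphi)\to -\infty$ as $t\to +\infty$, uniformly in $\mu\geq 0$. The Mountain Pass Theorem without the Palais--Smale assumption then supplies a sequence $\{u_n\}\subset H$ with $J_\mu(u_n)\to c_\mu > 0$ and $DJ_\mu(u_n)\to 0$.

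The decisive step is to establish the Palais--Smale condition. The standard combination $\vartheta J_\mu(u_n) - \langle DJ_\mu(u_n),u_n\rangle$, together with (f3), produces
\[
\tfrac{\vartheta-2}{2}\|u_n\|_H^2 \leq \vartheta c_\mu + o(1) + o(\|u_n\|_H) + c\mu\|u_n\|_H^p,
\]
in which the concave term is subleading because $p<2<\vartheta$; this bounds $\{u_n\}$ in $H$. Proposition \ref{prop:2.2} then supplies, along a subsequence, $u_n\rightharpoonup u$ in $H$ and $u_n\to u$ strongly in $L^q(\mathbb{R}^N)$ for every $q\in[2,2_\alpha^*)$. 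Using (f1) and H\"older (with Lemma \ref{lem:1.1} at hand to absorb the tail of $f(x,u_n)u_n$ if needed) the quantity $\int f(x,u_n)(u_n-u)\,dx$ tends to zero, and the concave term $\mu\int \xi|u_n|^{p-2}u_n(u_n-u)\,dx$ tends to zero because of the strong $L^2$ convergence combined with $\xi\in L^{2/(2-p)}$. Testing $DJ_\mu(u_n)\to 0$ against $u_n-u$ therefore gives $\|u_n\|_H\to\|u\|_H$, and, $H$ being Hilbert, convergence upgrades to strong. Hence $u$ is a critical point of $J_\mu$ at the positive level $c_\mu$ and, in particular, a nontrivial weak solution of \eqref{eq:1}.

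The main obstacle is the Palais--Smale verification: the nonlocality of $(I-\Delta)^\alpha$ prevents any localization argument, so compactness must be extracted entirely from Proposition \ref{prop:2.2}, which itself rests on the coercivity properties (V1)--(V2) of $V$. A minor but essential technical point is that $\mu_0$ has to depend only on the embedding constants and on $\lambda$, not on the PS sequence, so that $\inf_{\|u\|_H=\rho}J_\mu>0$ is a genuinely uniform quantitative threshold rather than a tautology; this is automatic from the explicit bound displayed above.
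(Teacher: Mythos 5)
Your proposal is correct and follows essentially the same route as the paper: the same mountain-pass geometry estimates (the paper's Lemma \ref{lem:2.4}), the same boundedness argument via the combination $\Phi(u_n)-\tfrac{1}{\vartheta}D\Phi(u_n)[u_n]$ with the concave term absorbed through H\"older and $\xi\in L^{2/(2-p)}(\mathbb{R}^N)$, and the same use of the compact embedding of Proposition \ref{prop:2.2} to recover the Palais--Smale condition and conclude that the mountain-pass critical point at positive level is a nontrivial weak solution.
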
 We will prove Theorem \ref{th:2.3} by variational
methods. First of all, we associate to equation \eqref{eq:1} the Euler
functional $\Phi \colon H \to \mathbb{R}$ defined by
\begin{equation} \label{eq:2.1} \Phi (u) = \frac{1}{2}
\int_{\mathbb{R}^N} \left| (I-\Delta)^{\alpha/2}u \right|^2 \, dx +
\frac{\lambda}{2} \int_{\mathbb{R}^N} V(x) |u(x)|^2 \, dx - \Psi (u),
\end{equation} where
\begin{equation*} \Psi (u) = \int_{\mathbb{R}^N} F(u(x))\, dx +
\frac{\mu}{p} \int_{\mathbb{R}^N} \xi(x) |u(x)|^{p} \, dx.
\end{equation*} 
It is easy to check that $\Phi$ is continuously
differentiable on $H$ under our assumptions. Moreover,
\begin{align*}
D\Phi (u)[v] &= \int_{\mathbb{R}^N}(I-\Delta)^{\alpha/2}u (I-\Delta)^{\alpha/2} v \, dx + \int_{\mathbb{R}^N} \lambda V(x) u(x) v(x) \, dx \\
D\Psi(u)[v] &= \int_{\mathbb{R}^N} f(x,u(x))v(x)\, dx + \mu \int_{\mathbb{R}^N} \xi(x) |u(x)|^{p-2} u(x) v(x) \, dx,
\end{align*}
so that weak solutions
to \eqref{eq:1} correspond to critical points of $\Phi$ via Definition
\ref{def:3.3}.

We will check that $\Phi$ satisfies the geometric assumptions of the Mountain Pass Theorem, see \cite{AmRa}.
\begin{lemma} \label{lem:2.4} Let us retain the assumption of Theorem
\ref{th:2.3}.
  \begin{itemize}
    \item[(i)] There exist three positive constants $\mu_0$, $\varrho$
and $\eta$ such that $\Phi(u) \geqslant \eta$ for all $u \in H$ with
$\|u\|_H =\varrho$ and all $\mu \in (0,\mu_0)$.
    \item[(ii)] Let $\varrho>0$ be the number constructed in step
(i). There exists $e \in H$ such that $\|e\|_H > \varrho$ and $\Phi(e)
< 0$ for all $\mu \geqslant 0$.
\end{itemize}
\end{lemma}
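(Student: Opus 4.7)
The plan is to verify the two Mountain Pass geometric conditions using the standard estimates on $F$ coming from (f1)--(f2), together with the Hölder/Sobolev control on the concave term, for part (i); and to exploit the superquadratic growth implied by (f3) for part (ii).

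For part (i), I would first combine (f1) and (f2) to produce, for every $\varepsilon>0$, a constant $C_\varepsilon>0$ such that
\[
|F(x,u)| \leq \varepsilon |u|^2 + C_\varepsilon |u|^q \qquad \text{for all } (x,u) \in \mathbb{R}^N \times \mathbb{R}.
\]
Using the continuous embeddings $H \hookrightarrow L^{\alpha,2}(\mathbb{R}^N) \hookrightarrow L^2(\mathbb{R}^N) \cap L^q(\mathbb{R}^N)$ (the first following from (V1), the second from the embedding theorem recalled in Section~2), this gives
$\int_{\mathbb{R}^N} F(x,u)\,dx \leq \varepsilon C_1 \|u\|_H^2 + C_\varepsilon C_2 \|u\|_H^q$. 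For the concave term, I would apply Hölder's inequality with exponents $2/p$ and $2/(2-p)$ to get
\[
\int_{\mathbb{R}^N} \xi(x) |u|^p \, dx \leq \|\xi\|_{L^{2/(2-p)}} \|u\|_{L^2}^p \leq C_3 \|u\|_H^p.
\]
Substituting these bounds into \eqref{eq:2.1} and choosing $\varepsilon$ small enough that $\lambda V$ contributions absorb the quadratic remainder (equivalently, the coefficient in front of $\|u\|_H^2$ is $\geq 1/4$), I obtain, for $\|u\|_H = \varrho$, an estimate of the form
\[
\Phi(u) \geq \tfrac{1}{4}\varrho^2 - C_\varepsilon C_2 \varrho^q - \tfrac{\mu}{p} C_3 \varrho^p.
\]
Since $q>2$, I pick $\varrho>0$ so small that $C_\varepsilon C_2 \varrho^{q-2} \leq 1/8$, leaving $\Phi(u) \geq \tfrac{1}{8}\varrho^2 - (\mu C_3/p)\varrho^p$; since $p<2$, choosing $\mu_0 := p\varrho^{2-p}/(16 C_3)$ yields $\Phi(u) \geq \eta := \varrho^2/16$ for every $\mu\in(0,\mu_0)$.

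For part (ii), I would first integrate (f3): fixing $u>0$ and integrating $\vartheta/u \leq f(x,u)/F(x,u)$ from a suitable reference level gives the usual lower bound $F(x,u) \geq c|u|^\vartheta - c'$ for some constants $c,c'>0$ (uniformly in $x$, say on a bounded set), with $\vartheta>2$. Pick any nontrivial $\varphi \in C_c^\infty(\mathbb{R}^N)$ with $\varphi \geq 0$, so that $\varphi \in H$. For $t>0$,
\[
\Phi(t\varphi) \leq \tfrac{t^2}{2}\|\varphi\|_H^2 - c\, t^\vartheta \int_{\operatorname{supp}\varphi} |\varphi|^\vartheta \, dx + C|\operatorname{supp}\varphi| - \tfrac{\mu}{p} t^p \int_{\mathbb{R}^N} \xi |\varphi|^p \, dx.
\]
Because $\xi>0$ and $\mu\geq 0$, the last term is non-positive, so it is enough to analyze the case $\mu=0$. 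Since $\vartheta>2$, the term $-c\,t^\vartheta \int |\varphi|^\vartheta$ dominates as $t\to+\infty$, giving $\Phi(t\varphi) \to -\infty$. Choosing $t_0$ large enough that $\Phi(t_0\varphi) < 0$ and $\|t_0\varphi\|_H > \varrho$, the element $e := t_0 \varphi$ satisfies the required properties for every $\mu \geq 0$.

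The only delicate point is the interplay between the concave term and the quadratic part in (i): because $p<2$, the $\mu$-dependent term would spoil positivity at the scale $\varrho$ if $\mu$ were not constrained; this is precisely why the threshold $\mu_0=\mu_0(\varrho)$ must be introduced and coupled to the radius $\varrho$ chosen from the superquadratic competition between the quadratic and $L^q$ terms. The rest of the argument is routine.
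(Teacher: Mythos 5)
Your proposal is correct and follows essentially the same route as the paper: the estimate $|F(x,u)|\leq \varepsilon|u|^2+C_\varepsilon|u|^q$ from (f1)--(f2) combined with the Sobolev embedding and the H\"older bound on the $\xi$-term for part (i), and the superquadratic lower bound on $F$ derived from (f3) together with $t\to+\infty$ along a ray for part (ii). The only cosmetic difference is that you fix the radius $\varrho$ by a smallness condition $C_\varepsilon C_2\varrho^{q-2}\leq 1/8$, whereas the paper maximizes the auxiliary function $g(t)=\tfrac14 t^{2-p}-\tfrac{CC_\varepsilon}{q}t^{q-p}$; both yield the same conclusion and the same coupling of $\mu_0$ to $\varrho$.
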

\begin{proof} Let us prove (i). For any fixed $\varepsilon>0$,
assumptions (f1) and (f2) imply that there is a positive constant
$C_\varepsilon$ such that
\begin{equation} \label{eq:3.2.1} |F(x,u)| \leq \frac{\varepsilon}{2}
|u|^2 + \frac{C_\varepsilon}{q}|u|^q
\end{equation} for all $x \in \mathbb{R}^N$ and $u \in
\mathbb{R}$. Integrating and using the Sobolev inequality, we get
\begin{align*} \int_{\mathbb{R}^N} F(x,u)\, dx &\leq
\frac{\varepsilon}{2} \int_{\mathbb{R}^N} |u(x)|^2 \, dx +
\frac{C_\varepsilon}{q} \int_{\mathbb{R}^N} |u(x)|^q \, dx \\
&\leq C \left( \frac{\varepsilon}{2} \|u\|_H^2 +
\frac{C_\varepsilon}{q} \|u\|_H^q \, dx \right).
\end{align*} Therefore
\begin{align*} \Phi(u) &= \frac{1}{2} \|u\|_H^2 - \int_{\mathbb{R}^N}
F(x,u(x))\, dx - \frac{\mu}{p} \int_{\mathbb{R}^N} \xi(x)|u(x)|^p \,
dx \\ &\geqslant \frac{1}{2} \|u\|_H^2 -C \left( \frac{\varepsilon}{2}
\|u\|_H^2 + \frac{C_\varepsilon}{q} \|u\|_H^q \, dx \right) -
\frac{\mu}{p} C\|\xi\|_{L^{2/(2-p)}(\mathbb{R}^N)} \|u\|_H^p \\ &=
\|u\|_H^p \left( \frac{1}{2} \left( 1- C \varepsilon \right)
\|u\|_H^{2-p} - \frac{C C_\varepsilon}{q} \|u\|_H^{q-p} -
\frac{\mu}{p} C\|\xi\|_{L^{2/(2-p)}(\mathbb{R}^N)} \right).
\end{align*} We select (for instance) $\varepsilon = \frac{1}{2C}$ and
maximize the function
\[ g(t) = \frac{1}{4} t^{2-p} - \frac{C C_\varepsilon}{q}t^{q-p}
\] for $t \geqslant 0$. It is an exercise to check that the maximum is
attained at some $\varrho>0$ where $g(\varrho)>0$. We conclude by
selecting $\mu_0>0$ so small that $g(\varrho) - \frac{\mu_0}{p}
C\|\xi\|_{L^{2/(2-p)}(\mathbb{R}^N)} >0$.

To prove (ii), we recall \eqref{eq:3.2.1}. By assumption (f3), for
some constant $c>0$ we have
\[ F(x,u) \geqslant c \left( |u|^\vartheta - |u|^2 \right)
\] for all $(x,u) \in \mathbb{R}^N \times \mathbb{R}$. Given $u \in H$
and $t>0$, we compute
\begin{align*} 
\Phi(tu) &= \frac{t^2}{2} \|u\|_H^2 -
\int_{\mathbb{R}^N} F(x,tu(x))\, dx - \frac{\mu}{p} t^p
\int_{\mathbb{R}^N} \xi(x)|u(x)|^p \, dx \\ 
&\leq \frac{t^2}{2}
\|u\|_H^2 - c t^\vartheta \int_{\mathbb{R}^N} |u(x)|^\vartheta \, dx +
ct^2 \int_{\mathbb{R}^N} |u(x)|^2 \, dx - \frac{\mu}{p} t^p
\int_{\mathbb{R}^N} \xi(x)|u(x)|^p \, dx.
\end{align*} 
Recalling that $1<p<2$ and $\vartheta>2$, we can let $t
\to +\infty$ and deduce that $\Phi(tu) \to -\infty$. The conclusion is
now immediate.
\end{proof}
We can now prove the main result of this section.
\begin{proof}[Proof of Theorem \ref{th:2.3}] For $0<\mu < \mu_0$, the
functional $\Phi$ satisfies the geometric assumptions of the Mountain
Pass Theorem. As a consequence, there exist a Palais-Smale sequence
$\{u_n\}_n$ from $H$, i.e.
	\begin{equation*} \Phi(u_n) \to c, \qquad D\Phi(u_n) \to 0
	\end{equation*} as $n \to +\infty$, where
\[ c = \inf_{\gamma \in \Gamma} \sup_{t \in [0,1]} \Phi(\gamma(t))
\quad\text{and} \quad \Gamma = \left\{ \gamma \in C([0,1],H) \mid
\gamma(0)=0, \ \gamma(1)=e \right\}.
\] We prove that $\{u_n\}_n$ is bounded. Indeed,
\begin{multline*} 1+c+\|u_n\|_H \geqslant \Phi(u_n) -
\frac{1}{\vartheta} D \Phi (u_n)[u_n] \\ = \left( \frac{1}{2} -
\frac{1}{\vartheta} \right) \|u_n\|_H^2 + \int_{\mathbb{R}^N} \left(
\frac{1}{\vartheta} u_n(x) f(x,u_n(x))-F(x,u_n(x)) \right) \, dx \\
{}+ \left( \frac{1}{\vartheta}-\frac{1}{p} \right) \int_{\mathbb{R}^N}
\mu \xi(x) |u_n(x)|^p \, dx.
\end{multline*} Since
\begin{multline*} \left( \frac{1}{p}-\frac{1}{\vartheta} \right) \mu
\int_{\mathbb{R}^N} \xi(x) |u_n(x)|^p \, dx \leq \left(
\frac{1}{p}-\frac{1}{\vartheta} \right) \mu \left( \int_{\mathbb{R}^N}
|\xi(x)|^{\frac{2}{2-p}} \, dx \right)^{\frac{2-p}{2}} \left(
\int_{\mathbb{R}^N} |u_n(x)|^2 \, dx \right)^{\frac{p}{2}} \\ = \left(
\frac{1}{p}-\frac{1}{\vartheta} \right) \mu
\|\xi\|_{L^{2/(2-p)}(\mathbb{R}^N)} \|u_n\|_{L^2(\mathbb{R}^N)}^p
\leq C \left( \frac{1}{p}-\frac{1}{\vartheta} \right) \mu
\|\xi\|_{L^{2/(2-p)}(\mathbb{R}^N)} \|u_n\|_H^p,
\end{multline*} we derive that
\begin{multline*} 1+c+\|u_n\|_H +C \left(
\frac{1}{p}-\frac{1}{\vartheta} \right) \mu
\|\xi\|_{L^{2/(2-p)}(\mathbb{R}^N)} \|u_n\|_H^p \\ \geqslant \left(
\frac{1}{2} - \frac{1}{\vartheta} \right) \|u_n\|_H^2 +
\int_{\mathbb{R}^N} \left( \frac{1}{\vartheta} u_n(x)
f(x,u_n(x))-F(x,u_n(x)) \right) \, dx \\ \geqslant \left( \frac{1}{2}
- \frac{1}{\vartheta} \right) \|u_n\|_H^2.
\end{multline*} Since $1<p<2$, this inequality shows that $\{u_n\}_n$
is a bounded sequence in $H$. By Proposition~\ref{prop:2.2},
$\{u_n\}_n$ converges up to a subsequence (weakly in $H$ and) strongly
in $L^2(\mathbb{R}^N)$ and in $L^q(\mathbb{R}^N)$ to some limit
$u$. Since
\[ \int_{\mathbb{R}^N} \xi(x) |u_n(x)-u(x)|^p \, dx \leq
\|\xi\|_{L^{2/(2-p)}(\mathbb{R}^N} \|u_n-u\|_{L^2(\mathbb{R}^N)}^p,
\] it follows immediately that $\{u_n\}_n$ is relatively compact in
$H$, or, in other words, that $\Phi$ satisfies the Palais-Smale
condition. Hence $u$ is a critical point of $\Phi$, namely a weak
solution to \eqref{eq:1}.
\end{proof}
We conclude this section with a regularity result. We skip its proof, since it is based on arguments that already appear in \cites{FelmerVergara,Ikoma,Se}.
\begin{theorem} \label{th:3.9}
The solution $u \in H$ belongs to $C_b^\beta(\mathbb{R}^N)$ for every $\beta \in (0,2\alpha)$. Here
\[
C_b^\beta(\mathbb{R}^N) = \left\{ u \in C(\mathbb{R}^N) \cap L^\infty(\mathbb{R}^N) \mid \sup_{\substack{x,y \in \mathbb{R}^N \\ x \neq y}} \frac{|u(x)-u(y)|}{|x-y|} < +\infty \right\} \quad \text{if $\beta<1$},
\]
and
\[
C_b^\beta(\mathbb{R}^N) = \left\{ u \in C^1(\mathbb{R}^N) \mid u \in L^\infty(\mathbb{R}^N),\ \nabla u \in L^\infty(\mathbb{R}^N) \cap C_b^{\beta-1}(\mathbb{R}^N)\right\} \quad \text{if $1<\beta<2$}.
\]
\end{theorem}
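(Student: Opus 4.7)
The strategy is to view \eqref{eq:1} as a convolution identity for the Bessel potential. Since $(I-\Delta)^\alpha u = h$ is equivalent to $u = G_{2\alpha}\star h$, the first step is to rewrite the equation as
\[
u(x) = (G_{2\alpha}\star h)(x), \qquad h(x) := f(x,u(x)) + \mu\,\xi(x)|u(x)|^{p-2}u(x) - \lambda V(x)u(x).
\]
The plan is then to run a Moser-type bootstrap on this identity to promote $u$ to $L^\infty(\mathbb{R}^N)$, and afterwards to convert an $L^\infty$ bound on $h$ into pointwise H\"older control on $u$ via the known regularity properties of the Bessel kernel. The general philosophy is the one already used in \cites{FelmerVergara,Ikoma,Se}; what I would do is adapt it to the present nonlinearity.

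For the bootstrap step, I would start from $u \in L^{\alpha,2}(\mathbb{R}^N)$, so by the embedding theorem $u \in L^r(\mathbb{R}^N)$ for every $r \in [2,2_\alpha^*]$. Using (f1) the term $f(x,u)$ lies in $L^{r/(q-1)}$, while $V u$ is controlled locally by $L^r$ and $\xi|u|^{p-2}u$ is handled by H\"older's inequality together with $\xi \in L^{2/(2-p)}$. Consequently $h$ belongs to $L^{r_1}$ for some $r_1$ computable from $r$ and $q$. Now I would invoke the Hardy-Littlewood-Sobolev inequality (or equivalently the mapping properties of $G_{2\alpha}$, which behaves like the Riesz potential $I_{2\alpha}$ near the origin and decays exponentially at infinity): the convolution with $G_{2\alpha}$ gains $2\alpha$ derivatives, hence integrability up to an exponent $r_2>r_1$. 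Iterating this gain a finite number of times produces $u \in L^r(\mathbb{R}^N)$ for every $r \in [2,\infty)$, and a further Moser truncation argument (as in \cite{FelmerVergara}) upgrades this to $u \in L^\infty(\mathbb{R}^N)$.

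Once $u \in L^\infty$, the three contributions in $h$ are bounded by fixed $L^\infty$ functions on every ball, so $h \in L^\infty_{\mathrm{loc}}(\mathbb{R}^N)$; combining with the decay of $u$ at infinity (itself a consequence of $u \in L^r$ for every $r<\infty$) one obtains $h \in L^\infty(\mathbb{R}^N)$. Representing $u = G_{2\alpha}\star h$ and exploiting the fact that $G_{2\alpha}$ is integrable, smooth outside the origin, exponentially decaying, and of order $|x|^{-(N-2\alpha)}$ near zero, standard kernel estimates (of the type carried out in \cite{Ikoma}) yield pointwise H\"older control
\[
|(G_{2\alpha}\star h)(x) - (G_{2\alpha}\star h)(y)| \leq C\,\|h\|_\infty\,|x-y|^\beta
\]
for every $\beta \in (0,2\alpha)$ (with obvious modifications of the same argument in the range $1<\beta<2$, using derivatives of the kernel). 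This produces the claimed membership in $C_b^\beta(\mathbb{R}^N)$.

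The genuinely delicate step is the $L^\infty$ bound, because the Bessel operator $(I-\Delta)^\alpha$ is not scale invariant: the iteration cannot be closed by a single elegant scaling argument as for the pure fractional Laplacian, and one has to track constants carefully through the Bessel convolution step. The presence of the singular weight $\xi$ adds a further technicality, but is absorbed each time by H\"older's inequality using $\xi \in L^{2/(2-p)}$ and the already-gained integrability of $u$. Once the $L^\infty$ estimate is secured, the passage to $C_b^\beta$ is, as noted, a purely kernel-theoretic computation performed almost verbatim in the references cited above.
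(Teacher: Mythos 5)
Your overall strategy --- rewrite \eqref{eq:1} as $u = G_{2\alpha}\star h$, bootstrap to $L^\infty(\mathbb{R}^N)$, then read H\"older regularity off the kernel --- is exactly the one the paper delegates to \cites{FelmerVergara,Ikoma,Se}, so the plan is the right one. The problem is that your execution breaks precisely at the points where the present equation differs from those models, namely in the claim that ``the three contributions in $h$ are bounded by fixed $L^\infty$ functions on every ball, so $h \in L^\infty_{\mathrm{loc}}$''. The potential $V$ is only assumed to satisfy (V1)--(V2): it is measurable, typically coercive and hence unbounded, and the only control you have on the corresponding term is $\int_{\mathbb{R}^N} V|u|^2\,dx < +\infty$, i.e.\ $\sqrt{V}\,u \in L^2(\mathbb{R}^N)$. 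This does not make $\lambda V u$ locally bounded, nor even locally $L^r$ for large $r$, so you cannot simply move it to the right-hand side. The standard fix is to exploit its sign instead: in the Moser/De Giorgi truncation step the contribution of $\lambda V u$ against the usual truncated test functions is nonnegative and can be discarded, so the $L^\infty$ bound survives; but the final representation $u = G_{2\alpha}\star h$ then has to treat the potential term separately (for instance via decay estimates on $u$, or by working with the operator $(I-\Delta)^\alpha + \lambda V$ directly). Incidentally, your parenthetical claim that membership in $L^r$ for every finite $r$ implies decay of $u$ at infinity is also false as stated.

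The second and more serious gap concerns the concave term. Since $\mu\,\xi|u|^{p-2}u = \mu\,\xi|u|^{p-1}\operatorname{sgn}u$ and $\xi$ is only assumed to belong to $L^{2/(2-p)}(\mathbb{R}^N)$, knowing $u \in L^\infty(\mathbb{R}^N)$ yields via H\"older only $\xi|u|^{p-1} \in L^{2/(2-p)}(\mathbb{R}^N)$ and nothing better: this term is exactly as irregular as $\xi$ itself. Hence $h \notin L^\infty(\mathbb{R}^N)$ in general, and your displayed estimate $|(G_{2\alpha}\star h)(x)-(G_{2\alpha}\star h)(y)| \leq C\|h\|_\infty |x-y|^\beta$ has no finite right-hand side. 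What the convolution actually gives for this piece is membership in $W^{2\alpha,\,2/(2-p)}(\mathbb{R}^N)$, which embeds into a H\"older class only up to exponent $2\alpha - N(2-p)/2$, and only when that quantity is positive; this does not reach every $\beta \in (0,2\alpha)$. To obtain the theorem as stated along your route one needs an additional integrability hypothesis on $\xi$ (say $\xi \in L^\infty(\mathbb{R}^N)$, or $\xi \in L^r(\mathbb{R}^N)$ for all large $r$), or the range of admissible $\beta$ must be restricted. As written, this step of your argument fails.
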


\section{Solutions for large values of $\lambda$ and small values of
$\mu$}

In this section we solve equation \eqref{eq:1} under weaker conditions
on the electric potential $V$. As we have seen in the previous
section, the compactness of $V$ yields the validity of the
Palais-Smale condition almost for free. We show that we can relax the
assumptions on $V$, provided that the parameter $\lambda$ is
sufficiently large.

Precisely, we assume the following:
\begin{itemize}
	\item[(V3)] $V \geq 0$ on $\mathbb{R}^N$;
	\item[(V4)] for some $b>0$, the Lebesgue measure of the set
$V^b = \left\{ x \in \mathbb{R}^N \mid V(x) < b \right\}$ is finite;
	\item[(V5)] the set\footnote{The notation $A^\circ$ is used to
denote the interior of a set $A$.} $\Omega = \left(V^{-1} \left( \{0\}
\right) \right)^\circ$ is nonempty and has a smooth
boundary. Furthermore, $\overline{\Omega} = V^{-1}(\{0\})$.
\end{itemize}
\begin{theorem} \label{th:3.1} Assume that (V3), (V4), (V5) and (f1),
(f2), (f3) are satisfied. There exist two constants $\lambda_0>0$ and
$\mu_0>0$ such that for every $\lambda > \lambda_0$ and every
$0<\mu<\mu_0$ equation \eqref{eq:1} possesses at least two nontrivial
solutions.
\end{theorem}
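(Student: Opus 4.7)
The plan is to work on the Hilbert space $H_\lambda$ consisting of those $u \in L^{\alpha,2}(\mathbb{R}^N)$ with $\int V|u|^2 < \infty$, endowed with the norm
\begin{equation*}
\|u\|_\lambda^2 = \int_{\mathbb{R}^N}\bigl|(I-\Delta)^{\alpha/2}u\bigr|^2 \, dx + \lambda \int_{\mathbb{R}^N} V(x)|u(x)|^2 \, dx,
\end{equation*}
and to produce two critical points of the associated functional $\Phi_\lambda$ (equation \eqref{eq:2.1} with $V$ replaced by $\lambda V$) by exploiting the concave-convex structure of Ambrosetti--Brezis--Cerami type: a negative-energy local minimum generated by the sub-linear term $\frac{\mu}{p}\int\xi|u|^p$, and a positive-energy mountain-pass critical point generated by the super-linear term $F(x,u)$.

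The key ingredient is a \emph{local} Palais-Smale property: there exist $c^*>0$ and $\lambda_0>0$ such that, for every $\lambda\geq\lambda_0$, every Palais-Smale sequence of $\Phi_\lambda$ at a level $c<c^*$ admits a strongly convergent subsequence in $H_\lambda$. Boundedness of such a sequence is obtained exactly as in the proof of Theorem \ref{th:2.3}, using (f3) and $1<p<2$. For strong convergence, one writes
\begin{equation*}
\int_{\mathbb{R}^N}|u_n|^2 \, dx = \int_{V^b}|u_n|^2 \, dx + \int_{\{V\geq b\}}|u_n|^2 \, dx
\end{equation*}
and notes that the second integral is bounded by $(\lambda b)^{-1}\|u_n\|_\lambda^2$, which becomes arbitrarily small for large $\lambda$. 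The first integral is controlled by (V4), which gives finite measure, together with the local compactness of $L^{\alpha,2}\hookrightarrow L^q_{\mathrm{loc}}$ for $q<2_\alpha^*$ from the embedding theorem. This passes to the $L^q$-terms, and a standard Brezis-Lieb/concentration argument using $D\Phi_\lambda(u_n)\to 0$ upgrades weak to strong convergence in $H_\lambda$.

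For the first solution, Lemma \ref{lem:2.4}(i) adapts verbatim to $H_\lambda$ with constants $\varrho,\eta,\mu_0>0$ independent of $\lambda\geq 1$ (because $V\geq 0$ only increases $\|u\|_\lambda$). Pick $\varphi \in C_c^\infty(\Omega)\setminus\{0\}$ with $\varphi\geq 0$; since $V\equiv 0$ on $\overline{\Omega}$ by (V5), the quantities $\|\varphi\|_\lambda$ and $\Phi_\lambda(t\varphi)$ do not depend on $\lambda$, and for small $t>0$
\begin{equation*}
\Phi_\lambda(t\varphi) \leq \frac{t^2}{2}\|\varphi\|_\lambda^2 - \frac{\mu t^p}{p}\int_\Omega \xi|\varphi|^p \, dx < 0
\end{equation*}
because $p<2$. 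Hence $c_1 := \inf\{\Phi_\lambda(u) : \|u\|_\lambda\leq\varrho\}<0$, and $|c_1|$ can be made smaller than $c^*$ by shrinking $\mu_0$, since it is controlled by $\mu$. Ekeland's variational principle on the closed ball together with Step 1 then produces a critical point $u_1$ with $\Phi_\lambda(u_1)=c_1<0$, hence nontrivial.

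For the second solution, Lemma \ref{lem:2.4}(ii) applied with $e=T\varphi$ for $T$ large and $\varphi \in C_c^\infty(\Omega)$ as above gives mountain-pass geometry with both $e$ and the MP level
\begin{equation*}
c_2 = \inf_{\gamma\in\Gamma_\lambda}\max_{t\in[0,1]}\Phi_\lambda(\gamma(t))
\end{equation*}
bounded by a constant independent of $\lambda$, again because $V$ vanishes on $\mathrm{supp}\,\varphi$. Enlarging $\lambda_0$ if necessary so that this bound lies below $c^*$, the Mountain Pass Theorem yields a Palais-Smale sequence at level $c_2\geq\eta>0$, and Step 1 gives a critical point $u_2$ with $\Phi_\lambda(u_2)>0$, thus distinct from $u_1$. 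The principal obstacle is the local Palais-Smale condition of Step 1: assumptions (V3)--(V5) must be coordinated with the size of $\lambda$ so that the non-compactness coming from translations to infinity is quantitatively suppressed, and the compactly-supported paths built from $\Omega$ must yield a $\lambda$-uniform upper bound on $c_2$ that sits below the threshold $c^*$.
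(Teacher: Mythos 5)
Your two-solution architecture coincides with the paper's: a negative-level critical point obtained via Ekeland's principle on the closed ball $\overline{B(0,\varrho)}$, driven by the concave term $\tfrac{\mu}{p}\int\xi|u|^p$ with $1<p<2$, and a positive-level mountain-pass point driven by the superquadratic $F$, with boundedness of Palais--Smale sequences proved exactly as in Theorem \ref{th:2.3}. Where you genuinely diverge is the compactness mechanism. The paper (Lemma \ref{lem:3.5}) establishes the \emph{full} Palais--Smale condition for $\lambda$ large: after a Brezis--Lieb splitting it bounds $\int f(x,\delta_n)\delta_n\,dx$ by a small multiple of $\|\delta_n\|_\lambda^2$, separating $\{|\delta_n|\geq R\}$ from $\{|\delta_n|\leq R\}$ and using Lemma \ref{lem:1.1} together with interpolation and the estimate $\|\delta_n\|_{L^2}^2\leq(\lambda b)^{-1}\|\delta_n\|_\lambda^2+o(1)$; consequently it needs no energy threshold and no $\lambda$-uniform control of the minimax levels, and (V5) plays no role in its compactness argument. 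You instead propose a Ding--Szulkin-type \emph{local} Palais--Smale property below a threshold, and then use (V5) substantively, through test functions in $C_c^\infty(\Omega)$ on which $\|\cdot\|_\lambda$ is $\lambda$-independent, to keep both critical levels below that threshold. Both routes work; yours is the more standard scheme under (V3)--(V5) but incurs a bookkeeping obligation the paper avoids.

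That obligation is the one place where your write-up, as literally stated, is circular: you fix $c^*$ and $\lambda_0$ first, note that the bound on $c_2$ is $\lambda$-independent, and then propose to ``enlarge $\lambda_0$ so that this bound lies below $c^*$'' --- but enlarging $\lambda_0$ changes neither the ($\lambda$-independent) bound on $c_2$ nor the already-fixed $c^*$. The correct formulation of your Step 1 is: for every $M>0$ there exists $\Lambda(M)>0$ such that $\Phi_\lambda$ satisfies the Palais--Smale condition at every level $c\leq M$ whenever $\lambda\geq\Lambda(M)$; the threshold must be allowed to depend on $\lambda$, since the constants in the tail estimate depend on the level through $\sup_n\bigl|\int_{\mathbb{R}^N}\mathscr{F}(x,\delta_n(x))\,dx\bigr|$. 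One must therefore first compute the $\lambda$-uniform bound $M_0$ on $c_2$ from the path $t\mapsto tT\varphi$, and only afterwards set $\lambda_0=\Lambda(M_0+1)$. With that reordering, and with the $V^b$-tail estimate spelled out (the finite measure of $V^b$ handles $V^b\setminus B(0,R)$ via the Sobolev inequality, local compactness handles $V^b\cap B(0,R)$), your argument closes.
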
 Again we will prove this result by means of variational
methods. Since $\lambda$ is no longer fixed, we will use the notation
$\Phi_\lambda$ for the Euler functional \eqref{eq:2.1}.

We define the space
\[ \mathscr{H} = \left\{ u \in L^{\alpha,2}(\mathbb{R}^N) \mid
\int_{\mathbb{R}^N} V(x) |u(x)|^2 \, dx < +\infty\right\}
\] endowed with the norm
\[ \|u\|_{\mathscr{H}}^2 = \int_{\mathbb{R}^{N}} \left|
(I-\Delta)^{\alpha/2} \right|^2 \, dx + \int_{\mathbb{R}^N} V(x)
|u(x)|^2 \, dx.
\] For technical reasons, we will need to work with the norm
\[ \|u\|_\lambda^2 = \int_{\mathbb{R}^{N}} \left|
(I-\Delta)^{\alpha/2} \right|^2 \, dx + \int_{\mathbb{R}^N} \lambda
V(x) |u(x)|^2 \, dx,
\] and we will write $\mathscr{H}_\lambda$ to denote the space
$\mathscr{H}$ endowed with the norm $\| \cdot \|_\lambda$.
\begin{lemma} \label{lem:3.2} There exist two constants $\gamma_0>0$
and $\lambda_0>0$ such that for every $\lambda \geqslant \lambda_0$
there results
	\[ \|u\|_{L^{\alpha,2}(\mathbb{R}^N)} \leq \gamma_0
\|u\|_\lambda
	\] for every $u \in \mathscr{H}_\lambda$.
\end{lemma}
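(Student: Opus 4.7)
The plan is to recognize that this lemma is essentially a bookkeeping observation rather than a substantive estimate: it follows immediately from the positivity condition (V3) together with the definition of the Bessel norm recalled in Section 2. Indeed, by the Fourier characterization
\[
\|u\|_{L^{\alpha,2}(\mathbb{R}^N)}^2 = \left\| (I-\Delta)^{\alpha/2} u \right\|_{L^2(\mathbb{R}^N)}^2 = \int_{\mathbb{R}^N} (1+|\xi|^2)^\alpha |\mathcal{F}u(\xi)|^2 \, d\xi,
\]
the Bessel norm coincides exactly with the first summand appearing in $\|u\|_\lambda^2$. Hence the $\lambda$-norm is obtained from the $L^{\alpha,2}$-norm by adding a non-negative perturbation, which produces a one-sided comparison at no cost.

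In concrete terms, I would argue as follows. Fix any $\lambda>0$ and any $u\in \mathscr{H}_\lambda$. Condition (V3) gives $V(x)\geq 0$ almost everywhere, so the integral $\int_{\mathbb{R}^N}\lambda V(x)|u(x)|^2\,dx$ is non-negative, and therefore
\[
\|u\|_\lambda^2 \;=\; \int_{\mathbb{R}^N}\bigl|(I-\Delta)^{\alpha/2}u\bigr|^2\,dx + \int_{\mathbb{R}^N}\lambda V(x)|u(x)|^2\,dx \;\geq\; \int_{\mathbb{R}^N}\bigl|(I-\Delta)^{\alpha/2}u\bigr|^2\,dx \;=\; \|u\|_{L^{\alpha,2}(\mathbb{R}^N)}^2.
\]
Taking square roots yields the claim with the explicit constants $\gamma_0 = 1$ and $\lambda_0$ taken to be any positive number (say $\lambda_0 = 1$). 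There is no genuine obstacle, since neither (V4) nor (V5) needs to be invoked; the only structural input is the sign of $V$.

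The interest of the lemma is not the proof itself but the consequence: once one knows that $\|\cdot\|_\lambda$ dominates the Bessel norm uniformly in $\lambda$, the continuous embeddings of Theorem~2.1 furnish embedding constants $\|u\|_{L^q(\mathbb{R}^N)}\leq C\gamma_0\|u\|_\lambda$ for every $q\in[2,2^*_\alpha]$ that are independent of the parameter $\lambda\geq\lambda_0$. This uniformity is what will later make it possible to verify the geometric assumptions of the Mountain Pass Theorem and the Palais-Smale condition for $\Phi_\lambda$ with constants that do not deteriorate as $\lambda\to +\infty$, which is the whole point of introducing the $\lambda$-dependent norm.
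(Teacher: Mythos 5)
Your proof is correct, and it is genuinely different from (and much shorter than) the one in the paper. You simply drop the non-negative term $\int_{\mathbb{R}^N}\lambda V|u|^2\,dx$ using (V3), which works because the Bessel quadratic form $\int_{\mathbb{R}^N}(1+|\xi|^2)^\alpha|\mathcal{F}u|^2\,d\xi$ coincides with $\|u\|_{L^{\alpha,2}}^2$ and already dominates the $L^2$ norm --- a point the paper itself makes after \eqref{eq:norm}. This gives $\gamma_0=1$ with no restriction on $\lambda$, and you are right that (V4), (V5) are not needed for the statement as written. The paper instead runs the classical Bartsch--Wang-type argument: it splits $\int_{\mathbb{R}^N}|u|^2$ over $V^b$ and its complement, controls the piece on $V^b$ via H\"older and the Sobolev embedding using the finiteness of $\mathcal{L}^N(V^b)$ from (V4), and bounds the piece on the complement by $\frac{1}{\lambda b}\int\lambda V|u|^2$, arriving at $\gamma_0=1+C\left(\mathcal{L}^N(V^b)\right)^{2\alpha/N}$ and an explicit threshold $\lambda_0$. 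That machinery is indispensable when the leading quadratic form is homogeneous (e.g.\ $\int|\nabla u|^2$ or $\int|\xi|^{2\alpha}|\mathcal{F}u|^2$) and does not control $\|u\|_2$; here it is redundant for the lemma itself, but it is the template for the estimate \eqref{eq:3.10} in Lemma \ref{lem:3.5}, where one needs the sharper information that $\|u\|_{L^2}^2$ is bounded by $\frac{1}{\lambda b}\|u\|_\lambda^2$ plus a compact remainder --- a smallness in $\lambda$ that your one-line bound does not provide. So your argument fully proves the lemma, but be aware that the quantitative content of the paper's proof, not the lemma's statement, is what gets reused later.
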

\begin{proof} Indeed, we get from the Sobolev embedding theorem
 \begin{multline*} 
 \int_{\mathbb{R}^N} |u(x)|^2 \, dx = \int_{V^b}
|u(x)|^2 \, dx + \int_{\mathbb{R}^N\setminus V^b} |u(x)|^2 \, dx \\
\leq \left(\mathcal{L}^N \left( V^b
\right)\right)^{\frac{2\alpha}{N}} \left( \int_{\mathbb{R}^N}
|u(x)|^{2_\alpha^*} \, dx \right)^{\frac{N-2\alpha}{2}} +
\int_{\mathbb{R}^N\setminus V^b} |u(x)|^2 \, dx \\ 
\leq
\left(\mathcal{L}^N \left( V^b \right)\right)^{\frac{2\alpha}{N}}
\left( \int_{\mathbb{R}^N} |u(x)|^{2_\alpha^*} \, dx
\right)^{\frac{N-2\alpha}{2}} + \frac{1}{\lambda
b}\int_{\mathbb{R}^N\setminus V^b} \lambda V(x) |u(x)|^2 \, dx \\
\leq C \left(\mathcal{L}^N \left( V^b
\right)\right)^{\frac{2\alpha}{N}} \int_{\mathbb{R}^N} \left|
(I-\Delta) u \right|^2 \, dx + \frac{1}{\lambda
b}\int_{\mathbb{R}^N\setminus V^b} \lambda V(x) |u(x)|^2 \, dx,
	 \end{multline*} 
	 and therefore
 \begin{multline*} 
 \int_{\mathbb{R}^N} \left| (I-\Delta)^{\alpha/2} u
\right|^2 \, dx \leq \int_{\mathbb{R}^N} \left|
(I-\Delta)^{\alpha/2} u \right|^2 \, dx + \int_{\mathbb{R}^N} |u(x)|^2
\, dx \\ 
\leq \left(1+ C \left(\mathcal{L}^N \left( V^b
\right)\right)^{\frac{2\alpha}{N}} \right) \left( \int_{\mathbb{R}^N}
\left| (I-\Delta)^{\alpha/2} u \right|^2 \, dx + \int_{\mathbb{R}^N}
\lambda V(x) |u(x)|^2 \, dx \right) \\ = \gamma_0 \left(
\int_{\mathbb{R}^N} \left| (I-\Delta)^{\alpha/2} u \right|^2 \, dx +
\int_{\mathbb{R}^N} \lambda V(x) |u(x)|^2 \, dx \right)
 \end{multline*} 
 whenever
 \[ \lambda \geqslant \lambda_0 = \frac{1}{b} \frac{1}{1+ C
\left(\mathcal{L}^N \left( V^b \right)\right)^{\frac{2\alpha}{N}}}.
 \]
\end{proof}
\begin{corollary} 
	For all $s \in [2,2_\alpha^*)$, there exists a
constant $\gamma_s >0$ such that 
\[
\|u\|_{L^s(\mathbb{R}^N)} \leq
\gamma_s \|u\|_{L^{\alpha,2}(\mathbb{R}^N)}\leq \gamma_0 \gamma_s
\|u\|_\lambda
\]
for every $u \in \mathscr{H}_\lambda$.
\end{corollary}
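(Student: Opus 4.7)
The plan is to simply chain two embeddings: the fractional Sobolev embedding for Bessel spaces, and the bound $\|u\|_{L^{\alpha,2}(\mathbb{R}^N)} \leq \gamma_0 \|u\|_\lambda$ just established in Lemma~\ref{lem:3.2}. There is no compactness to extract and no choice of parameters to make; the corollary is really a bookkeeping statement that rewrites the Sobolev embedding in terms of the $\lambda$-dependent norm that will be used throughout Section~4.

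More precisely, I would invoke part (2) of the embedding theorem recalled at the beginning of Section~2: for every $s \in [2,2_\alpha^*)$, the space $L^{\alpha,2}(\mathbb{R}^N)$ embeds continuously into $L^s(\mathbb{R}^N)$, so that there exists a constant $\gamma_s > 0$, depending only on $s$, $N$, and $\alpha$, with
\[
\|u\|_{L^s(\mathbb{R}^N)} \leq \gamma_s \|u\|_{L^{\alpha,2}(\mathbb{R}^N)}
\]
for every $u \in L^{\alpha,2}(\mathbb{R}^N)$. This gives the first inequality in the statement, and it holds regardless of $\lambda$.

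For the second inequality, I would apply Lemma~\ref{lem:3.2}: provided $\lambda \geq \lambda_0$, every $u \in \mathscr{H}_\lambda$ satisfies
\[
\|u\|_{L^{\alpha,2}(\mathbb{R}^N)} \leq \gamma_0 \|u\|_\lambda,
\]
and composing the two estimates yields $\|u\|_{L^s(\mathbb{R}^N)} \leq \gamma_0 \gamma_s \|u\|_\lambda$, as desired. No step here is difficult; the only thing worth being slightly careful about is that the embedding constant $\gamma_s$ really is independent of $\lambda$, so that the $\lambda$-dependence in the final inequality is entirely absorbed into the factor $\gamma_0$ (which itself is independent of $\lambda$ once $\lambda \geq \lambda_0$, as exhibited in the proof of Lemma~\ref{lem:3.2}).
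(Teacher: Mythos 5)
Your proposal is correct and coincides with the paper's own argument, which likewise just combines the Sobolev embedding theorem with Lemma~\ref{lem:3.2}. Your added remark that the estimate requires $\lambda \geq \lambda_0$ (inherited from Lemma~\ref{lem:3.2}) and that $\gamma_s$ is independent of $\lambda$ is a reasonable point of care that the paper leaves implicit.
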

\begin{proof} It suffices to combine the Sobolev embedding theorem
with Lemma \ref{lem:3.2}.
\end{proof} The mountain-pass geometry of $\Phi_\lambda \colon
\mathscr{H}_\lambda \to \mathbb{R}$ is ensured by Lemma
\ref{lem:2.4}. On the contrary, the Palais-Smale condition is now
harder to prove, since no \emph{coerciveness} assumption on the
electric potential has been made.
\begin{lemma} Suppose that $u_n \rightharpoonup u_0$ in
$\mathscr{H}_\lambda$ as $n \to +\infty$. Then, up to a subsequence,
	\begin{equation} \label{eq:3.1} \Phi_\lambda(u_n) =
\Phi_\lambda (u_n-u_0) + \Phi_\lambda(u_0)+o(1)
	\end{equation} and
	\begin{equation}\label{eq:3.2} D\Phi_\lambda(u_n) =
D\Phi_\lambda (u_n-u_0) + D\Phi_\lambda(u_0)+o(1)
	\end{equation} as $n \to +\infty$. In particular, if
$\{u_n\}_n$ is a Palais-Smale sequence at level $d$, then
	\begin{equation}\label{eq:3.3} \Phi_\lambda(u_n-u_0) =
d-\Phi_\lambda(u_0) +o(1), \quad D\Phi_\lambda(u_n-u_0) = o(1)
	\end{equation} as $n \to +\infty$, up to a subsequence.
\end{lemma}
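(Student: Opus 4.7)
The plan is to handle the three pieces of $\Phi_\lambda$ separately: the Hilbert quadratic part, the $F$ part, and the $\xi |u|^p$ part; the two decompositions \eqref{eq:3.1} and \eqref{eq:3.2} then follow by summation, and \eqref{eq:3.3} is an immediate consequence upon recalling that $u_0$ is automatically a critical point of $\Phi_\lambda$ (because $D\Phi_\lambda$ is weakly continuous on bounded sets, thanks to local compact embedding).

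First, since $\mathscr{H}_\lambda$ is a Hilbert space with inner product associated to $\|\cdot\|_\lambda^2$, weak convergence $u_n \rightharpoonup u_0$ gives the standard identity
\[
\|u_n\|_\lambda^2 = \|u_n - u_0\|_\lambda^2 + \|u_0\|_\lambda^2 + o(1),
\]
and the corresponding dual identity for the quadratic part of $D\Phi_\lambda$. Next, by Lemma \ref{lem:3.2} the sequence $\{u_n\}$ is bounded in $L^{\alpha,2}(\mathbb{R}^N)$, hence by the local compact embedding into $L^q_{\mathrm{loc}}(\mathbb{R}^N)$ and a diagonal argument we can extract a subsequence with $u_n \to u_0$ almost everywhere on $\mathbb{R}^N$. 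Combining pointwise convergence with the growth bound \eqref{eq:3.2.1} and the $L^2 \cap L^q$ uniform control of $\{u_n\}$, the Brezis--Lieb lemma applied to $F(x,\cdot)$ yields
\[
\int_{\mathbb{R}^N} F(x,u_n)\,dx = \int_{\mathbb{R}^N} F(x,u_n - u_0)\,dx + \int_{\mathbb{R}^N} F(x,u_0)\,dx + o(1).
\]
For the weighted term, I would use the Brezis--Lieb lemma applied to $|\cdot|^p$ together with the fact that $\xi \in L^{2/(2-p)}(\mathbb{R}^N)$ and $\{u_n\}$ is bounded in $L^2$: the estimate
\[
\Bigl| \xi(x) \bigl( |u_n|^p - |u_n - u_0|^p - |u_0|^p \bigr)\Bigr| \leq C\, \xi(x)\bigl(|u_0|\, |u_n-u_0|^{p-1} + |u_0|^p\bigr)
\]
allows a Hölder/vanishing argument to pass to the limit.

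For the derivative decomposition \eqref{eq:3.2}, the quadratic part splits exactly by linearity of the inner product. For the nonlinear parts I would show that the functionals
\[
v \longmapsto \int_{\mathbb{R}^N} \bigl[ f(x,u_n) - f(x,u_n-u_0) - f(x,u_0)\bigr] v\,dx
\]
tend to zero in the dual of $\mathscr{H}_\lambda$. This is done using the inequality $|f(x,a+b) - f(x,a) - f(x,b)| \leq \varepsilon(|a|+|b|) + C_\varepsilon(|a|^{q-2}|b| + |a||b|^{q-2})$ together with Hölder's inequality and the Corollary to Lemma \ref{lem:3.2}, exploiting that $u_n \to u_0$ a.e.\ and in $L^q_{\mathrm{loc}}$. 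An entirely analogous argument, using $|x|^{p-2}x$ in place of $f$ and the $L^{2/(2-p)}$ integrability of $\xi$, handles the concave term.

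Finally, \eqref{eq:3.3} is obtained by specializing \eqref{eq:3.1}--\eqref{eq:3.2} to a Palais--Smale sequence at level $d$ and noting that $D\Phi_\lambda(u_0)=0$ (test the weak equation against arbitrary $v \in \mathscr{H}_\lambda$ and pass to the limit with the same Brezis--Lieb machinery). The main technical obstacle is the derivative splitting: controlling the cross terms $f(x,u_n) - f(x,u_n-u_0) - f(x,u_0)$ in a dual-uniform way on all of $\mathbb{R}^N$ (rather than just locally), which is where the growth assumption (f1) together with the embedding $\mathscr{H}_\lambda \hookrightarrow L^q(\mathbb{R}^N)$ from the Corollary becomes essential.
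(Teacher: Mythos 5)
Your overall strategy is the same as the paper's: split $\Phi_\lambda$ into the quadratic part (handled by the Hilbert-space identity for weak convergence), the $F$-part (handled by a Brezis--Lieb type argument after extracting an a.e.\ convergent subsequence), and the weighted concave part (handled by H\"older with $\xi\in L^{2/(2-p)}(\mathbb{R}^N)$ and the local strong convergence), and then to read off \eqref{eq:3.3}. For the first and third terms and for \eqref{eq:3.4} your argument matches the paper's essentially line by line; your observation that $\xi|u_n-u_0|^p$ in fact tends to $0$ in $L^1$ is exactly what the paper proves.

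The one step I would not accept as written is the pointwise inequality
\[
|f(x,a+b)-f(x,a)-f(x,b)|\leq \varepsilon\left(|a|+|b|\right)+C_\varepsilon\left(|a|^{q-2}|b|+|a|\,|b|^{q-2}\right),
\]
which you use for the derivative splitting \eqref{eq:3.2}, i.e.\ for \eqref{eq:3.6}. Under (f1)--(f2) the function $f$ is only continuous with the growth bound $|f(x,u)|\leq\varepsilon|u|+C_\varepsilon|u|^{q-1}$; no control on increments of $f$ in the $u$-variable is assumed, so a bound on $f(x,a+b)-f(x,a)$ proportional to powers of $|b|$ does not follow (a rapidly oscillating $f$ such as $u\mapsto |u|^{q-1}\sin(|u|^{M})$ satisfies (f1)--(f2) but violates your inequality for large $|a|$ and tiny $|b|$). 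The analogous difficulty does not arise for $F$, because there one has the exact integral representation $F(x,\delta_n+u_0)-F(x,\delta_n)=\int_0^1 f(x,\delta_n+\zeta u_0)u_0\,d\zeta$, which only uses the growth of $f$ itself --- this is precisely why the paper's argument for \eqref{eq:3.4} works. For \eqref{eq:3.6} the correct route is an integrated argument (continuity of the Nemytskii operator $u\mapsto f(\cdot,u)$ from $L^2\cap L^q$ into $L^2+L^{q/(q-1)}$, combined with a splitting into a large ball, where $u_n\to u_0$ strongly in $L^r_{\mathrm{loc}}$, and its complement, where $u_0$ has small $L^2\cap L^q$ norm), not a pointwise estimate. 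To be fair, the paper itself dispatches \eqref{eq:3.6} and \eqref{eq:3.7} with ``reasoning in a very similar way,'' so you are glossing over the same point the author does; but your proposal presents the gap as a concrete inequality that is actually false under the stated hypotheses, and that should be repaired.
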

\begin{proof} From the weak convergence assumption on $u_n$ it follows
that $\|u_n\|_\lambda^2 = \|u_n-u_0\|_\lambda^2 +\|u_0\|_\lambda^2 +
o(1)$. To prove \eqref{eq:3.1} and \eqref{eq:3.2} it will be enough to
check that as $n \to +\infty$
	\begin{align} &\int_{\mathbb{R}^N}
\left(F(x,u_n(x))-F(x,u_n(x)-u_0(x))-F(x,u_0(x)) \right) dx
=o(1) \label{eq:3.4} \\ &\int_{\mathbb{R}^N} \xi(x)\left(|u_n(x)|^p -
|u_n(x) - u_0(x)|^p -|u_0(x)|^p \right) dx =o(1) \label{eq:3.5} \\
&\int_{\mathbb{R}^N} \left(f(x,u_n(x)) -f(x,u_n(x)-u_0(x))-f(x,u_0(x))
\right) \phi(x)\, dx =o(1) \label{eq:3.6}
\end{align} and
\begin{multline} \int_{\mathbb{R}^N} \xi(x) (|u_n(x)|^{p-2}u_n(x)
-|u_n(x)-u_0(x)|^{p-2}(u_n(x)-u_0(x))-|u_0(x)|^{p-2}u_0(x)) \phi(x) \,
dx \\ = o(1) \label{eq:3.7}
\end{multline} for all $\phi \in \mathscr{H}_\lambda$. To prove
\eqref{eq:3.4} we follow the spirit of an idea due to Brezis and Lieb (\cite{BrLi}). We define
$\delta_n = u_n - u_0$ so that $\delta_n \to 0$ in
$L_{\mathrm{loc}}^2(\mathbb{R}^N)$ and observe that for every
$\varepsilon>0$ there exists a constant $C_\varepsilon>0$ such that
\begin{align} |f(x,u)| &\leq \varepsilon |u| + C_\varepsilon
|u|^{q-1} \\ |F(x,u)| &\leq \int_0^1 \left| f(x,tu) \right| |u|\,
dt \leq \varepsilon |u|^2 + C_\varepsilon |u|^{q} \label{eq:3.9}
\end{align} for every $(x,u) \in \mathbb{R}^N \times
\mathbb{R}^N$. Hence
\begin{multline*} 
\left| F(x,\delta_n + u_0) -F(x,\delta_n) \right|
\leq \int_0^1 |f(x,\delta_n + \zeta u_0)| |u_0|\, d\zeta \\
\leq \int_0^1 \left( \varepsilon |\delta_n + \zeta u_0||u_0| +
C_\varepsilon |\delta_n + \zeta u_0|^{q-1} |u_0| \right) d\zeta \\
\leq C \left( \varepsilon |\delta_n||u_0| + \varepsilon |u_0|^2 +
C_\varepsilon |\delta_n|^{q-1} |u_0| + C_\varepsilon 
|u_0|^{q} \right),
\end{multline*} 
and the Young inequality for numbers implies that
\begin{equation*} \left| F(x,\delta_n + u_0) -F(x,\delta_n) \right|
\leq C \left( \varepsilon |\delta_n|^2 + \varepsilon |u_0|^2
+C_\varepsilon |\delta_n|^{q} + C_\varepsilon |u_0|^{q} \right).
\end{equation*} Using \eqref{eq:3.9} we find similarly
\begin{equation*} \left| F(x,\delta_n + u_0) -F(x,\delta_n) - F(x,u_0)
\right| \leq C \left( \varepsilon |\delta_n|^2 + \varepsilon
|u_0|^2 +C_\varepsilon |\delta_n|^{q} + C_\varepsilon |u_0|^{q}
\right).
\end{equation*} We introduce
\[ 
M_n(x) = \left( F(x,\delta_n + u_0) -F(x,\delta_n) - F(x,u_0)
-\varepsilon |\delta_n|^2 -C_\varepsilon |\delta_n|^{q} \right) \lor
0,
\] 
where $a \lor b = \max\{a,b\}$. From the previous estimates it
follows that $0 \leq M_n \leq \varepsilon |u_0|^2 + C_\varepsilon
|u_0|^{q} \in L^1(\mathbb{R}^N)$ for every $n \in \mathbb{N}$. By dominated convergence, $M_n \to 0$ in $L^1(\mathbb{R}^N)$. Therefore
\begin{multline*} 
	\limsup_{n \to +\infty} \int_{\mathbb{R}^N}\left|
F(x,\delta_n(x) + u_0(x)) -F(x,\delta_n(x)) - F(x,u_0(x)) \right| \,
dx \\ 
\leq C \varepsilon \limsup_{n \to +\infty} \left(
\|\delta_n\|_{L^2(\mathbb{R}^N)}^2 +
\|\delta_n\|_{L^q(\mathbb{R}^N)}^q \right),
\end{multline*} 
and \eqref{eq:3.4} follows.  To prove \eqref{eq:3.5}
we simply recall that $\delta_n \to 0$ strongly in
$L_{\mathrm{loc}}^2(\mathbb{R}^N)$. With the aid of the H\"{o}lder
inequality, for any measurable set $A$ we can write
\begin{equation*} 
	\int_A \xi(x) | \delta_n(x) |^p \, dx \leq
\left(\int_A | \xi(x) |^{\frac{2}{2-p}} \, dx \right)^{\frac{2-p}{2}}
\left( \int_A |\delta_n(x)|^2 \, dx \right)^{\frac{p}{2}}.
\end{equation*} 
We take $A=\mathbb{R}^N \setminus B(0,\bar{R})$ for a
sufficiently large radius $\bar{R}$ in such a way that
$\int_{\mathbb{R}^N\setminus B(0,\bar{R})} | \xi(x) |^{\frac{2}{2-p}} \, dx$
is as small as we like. The term $\int_{B(0,\bar{R})} |\delta_n(x)|^2
\, dx$ is small due to the strong local convergence. We have thus proved that
\[ 
\int_{\mathbb{R}^N} \xi(x) |\delta_n(x)|^p \, dx =o(1).
\] 
Since
\[ 
\left| \int_{\mathbb{R}^N} \xi(x) \left( |u_n(x)|^p - |u_0(x)|^p
\right) \, dx \right| \leq \int_{\mathbb{R}^N} \xi(x)
|\delta_n(x)|^p \, dx,
\] 
the proof of \eqref{eq:3.5} is complete. Reasoning in a very
similar way we can also check the validity of \eqref{eq:3.6} and
\eqref{eq:3.7}.  The last part of the Lemma is standard, and we omit
it.
\end{proof}
\begin{lemma} \label{lem:3.5} Assume that (V3), (V4), (V5) and (f1),
(f2), (f3) hold. For some $\Lambda>0$, the functional $\Phi_\lambda$
satisfies the Palais-Smale condition for any $\lambda \geqslant
\Lambda$.
\end{lemma}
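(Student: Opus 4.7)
The approach is standard in spirit: establish boundedness, extract a weak limit, use the Brezis--Lieb-type decomposition supplied by the previous lemma, and then pin the remainder down to zero by exploiting the twin features (V4) (finite measure of $V^b$) and large $\lambda$. So, given a Palais--Smale sequence $\{u_n\}_n \subset \mathscr{H}_\lambda$ at level $d$, I would first repeat the computation used in the proof of Theorem~\ref{th:2.3} --- combining $\Phi_\lambda(u_n) - \vartheta^{-1} D\Phi_\lambda(u_n)[u_n]$ with (f3), H\"older's inequality on the $\xi$-term, and $p<2$ --- to obtain a uniform bound in $\mathscr{H}_\lambda$. Passing to a subsequence $u_n \rightharpoonup u_0$ in $\mathscr{H}_\lambda$ and applying the preceding lemma turns $v_n := u_n - u_0$ into a Palais--Smale sequence for $\Phi_\lambda$ at level $d - \Phi_\lambda(u_0)$ with $v_n \rightharpoonup 0$. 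Since $\|u_n - u_0\|_\lambda^2 = \|u_n\|_\lambda^2 - \|u_0\|_\lambda^2 + o(1)$, it is enough to prove $\|v_n\|_\lambda \to 0$, and the previous lemma together with $D\Phi_\lambda(v_n)[v_n] = o(1)$ will reduce this to establishing
\[
\int_{\mathbb{R}^N} f(x, v_n) v_n \, dx = o(1) + r(\lambda) \|v_n\|_\lambda^2,
\]
with $r(\lambda) \to 0$ as $\lambda \to +\infty$: the extra $\lambda$-small term will then be absorbed on the left.

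To produce this last estimate I would split $\mathbb{R}^N = V^b \sqcup (\mathbb{R}^N \setminus V^b)$ and further split each piece by $\{|v_n| \leq R\}$ versus $\{|v_n| > R\}$, with $R$ chosen as in Lemma~\ref{lem:1.1}. On $V^b$, (V4) combined with the local compactness of the embedding $\mathscr{H}_\lambda \hookrightarrow L^q_{\mathrm{loc}}(\mathbb{R}^N)$ for $q \in [2, 2_\alpha^*)$ and a standard covering argument yields strong convergence $v_n \to 0$ in $L^q(V^b)$; this handles the $\{|v_n| \leq R\}$ piece directly via $|f(x,v_n) v_n| \leq C |v_n|^2$, and on $\{|v_n|>R\} \cap V^b$ Lemma~\ref{lem:1.1} gives $|f(x,v_n) v_n| \leq |v_n|^2 \mathscr{F}(x,v_n)^{1/\tau}$, whence H\"older's inequality with exponents $(\tau, \tau/(\tau-1))$ --- admissible since $\tau > N/(2\alpha)$ forces $2\tau/(\tau-1) < 2_\alpha^*$ --- gives the bound
\[
\int_{\{|v_n|>R\}\cap V^b} |f(x,v_n) v_n| \, dx \leq \Big(\int_{\mathbb{R}^N} \mathscr{F}(x,v_n) \, dx\Big)^{1/\tau} \|v_n\|_{L^{2\tau/(\tau-1)}(V^b)}^2 = o_n(1),
\]
whose first factor is uniformly bounded through the Palais--Smale identity $\int \mathscr{F}(x,v_n)\,dx = \Phi_\lambda(v_n) - \tfrac{1}{2} D\Phi_\lambda(v_n)[v_n] + \mu(\tfrac{1}{p} - \tfrac{1}{2})\int \xi|v_n|^p\,dx$. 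On $\mathbb{R}^N \setminus V^b$ the inequality $V \geq b$ implies $\|v_n\|_{L^2(\mathbb{R}^N \setminus V^b)}^2 \leq (\lambda b)^{-1} \|v_n\|_\lambda^2$; interpolating this with the Sobolev-type estimate $\|v_n\|_{L^{2_\alpha^*}} \leq C \|v_n\|_\lambda$ furnished by the corollary to Lemma~\ref{lem:3.2} shows that the $L^q$-norm of $v_n$ over this region, for any $q \in [2, 2_\alpha^*)$, is bounded by $C \lambda^{-\sigma} \|v_n\|_\lambda$ for some $\sigma > 0$; the same two-subcase argument then produces a contribution $\leq C \lambda^{-\sigma} \|v_n\|_\lambda^2$ on $\mathbb{R}^N \setminus V^b$.

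Summing the four pieces gives
\[
\|v_n\|_\lambda^2 \leq o_n(1) + C \lambda^{-\sigma} \|v_n\|_\lambda^2,
\]
so choosing $\Lambda > 0$ with $C \Lambda^{-\sigma} < \tfrac{1}{2}$ and imposing $\lambda \geq \Lambda$ lets me absorb the last term on the left and conclude that $\|v_n\|_\lambda \to 0$. The hard part is the region $V^b$: coercivity is lost there, so strong $L^q(V^b)$-convergence of $v_n \rightharpoonup 0$ must be extracted from the finite measure assumption (V4) and the local compactness of the Bessel embedding --- in contrast with Section~3, where Proposition~\ref{prop:2.2} was available --- while the super-quadratic behaviour of $f$ on the same region is tamed through the a priori bound on $\int \mathscr{F}(x,v_n)\,dx$ provided by Lemma~\ref{lem:1.1}.
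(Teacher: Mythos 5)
Your proposal is correct and follows essentially the same route as the paper's proof: boundedness as in Theorem~\ref{th:2.3}, the Brezis--Lieb decomposition of the preceding lemma, the splitting by $\{|v_n|\le R\}$ versus $\{|v_n|>R\}$ with Lemma~\ref{lem:1.1} and H\"older's inequality in $L^\tau$--$L^{\tau/(\tau-1)}$, the $(\lambda b)^{-1}$ smallness coming from (V4), interpolation up to $L^{2_\alpha^*}$, and absorption for $\lambda\ge\Lambda$. The only difference is organizational: the paper performs the $V^b$ split once, to obtain the $L^2$ estimate $\|v_n\|_{L^2}^2\le(\lambda b)^{-1}\|v_n\|_\lambda^2+o(1)$, and then works on all of $\mathbb{R}^N$, whereas you carry the four-way decomposition through the $\int f(x,v_n)v_n$ term itself.
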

\begin{proof} We follow some ideas of \cite{MR2321894}. As in the proof
of Theorem \ref{th:2.3}, any Palais-Smale sequence $\{u_n\}_n$ for
$\Phi_\lambda$ at level $d$ is bounded. Up to a subsequence, we may
assume that $u_n \rightharpoonup u_0$ in $\mathscr{H}_\lambda$ and
$u_n \to u_0$ strongly in $L^r_{\mathrm{loc}}(\mathbb{R}^N)$ for every
$r \in [2,2_\alpha^*)$. Writing again $\delta_n = u_n-u_0$, assumption
(V4) implies that
\begin{equation} \label{eq:3.10} \int_{\mathbb{R}^N} |\delta_n(x)|^2
\, dx \leq \frac{1}{\lambda b} \int_{\mathbb{R}^N \setminus V^b}
\lambda V(x) |\delta_n(x)|^2 \, dx + \int_{V^b} |\delta_n(x)|^2 \, dx
\leq \frac{1}{\lambda b} \| \delta_n\|^2_{\lambda} + o(1).
\end{equation} and remark that
\begin{align*} 
\int_{\mathbb{R}^N} \mathscr{F}(x,\delta_n(x)) \, dx &=
\Phi_\lambda(\delta_n) - \frac{1}{2} D\Phi_\lambda(\delta_n)[\delta_n]
- \left( \frac{1}{2} - \frac{1}{p} \right) \mu \int_{\mathbb{R}^N}
\xi(x) |\delta_n(x)|^p \, dx \\
&= d - \Phi_\lambda(u_0) +o(1)
\end{align*} 
by \eqref{eq:3.3}. Let
\[ 
N_0 = \sup_{n \in \mathbb{N}} \left| \int_{\mathbb{R}^N}
\mathscr{F}(x,\delta_n(x)) \, dx \right|, \quad \sigma = \frac{2\tau}{\tau-1} \in (2,2_\alpha^*).
\] 
From the
H\"{o}lder inequality and Lemma \ref{lem:1.1},\footnote{$R>0$ is the number constructed in Lemma \ref{lem:1.1}}
\begin{multline}\label{eq:3.12} 
\int_{|\delta_n| \geqslant R}
f(x,\delta_n(x)) \delta_n(x) \, dx \leq \left( \int_{|\delta_n|
\geqslant R} \left| \frac{f(x,\delta_n(x))}{\delta_n(x)} \right|^\tau
dx \right)^{1/\tau} \left( \int_{|\delta_n| \geqslant R}
|\delta_n(x)|^\sigma \, dx \right)^{2/\sigma} \\ 
\leq \left(
\int_{|\delta_n| \geqslant R} \mathscr{F}(x,\delta_n(x)) \, dx
\right)^{1/\tau} \|\delta_n\|_{L^\sigma(\mathbb{R}^N)}^2 \leq
N_0^{1/\tau} \|\delta_n\|_{L^\sigma(\mathbb{R}^N)}^2.
\end{multline} 
We want to estimate the last norm of $\delta_n$ in
terms of the norm in $\mathscr{H}_\lambda$. To do this, we pick $\nu
\in (\sigma,2_\alpha^*)$ and interpolate:
\begin{multline*} \|\delta_n\|_{L^\sigma(\mathbb{R}^N)}^\sigma
\leq
\|\delta_n\|_{L^2(\mathbb{R}^N)}^{\frac{2(\nu-\sigma)}{\nu-2}}
\|\delta_n\|_{L^\nu(\mathbb{R}^N)}^{\frac{\nu (\sigma-2)}{\nu-2}}
\leq \left( \frac{1}{\lambda b} \right)^{\frac{\nu-\sigma}{\nu
-2}} \|\delta_n\|_\lambda^{\frac{2(\nu-\sigma)}{\nu-2}} \left(
\gamma_0 \gamma_\nu \|\delta_n\|_\lambda
\right)^{\frac{\nu-\sigma}{\nu -2}} + o(1) \\ \leq \left(
\gamma_0 \gamma_\nu \right)^{\frac{\nu-\sigma}{\nu -2}} \left(
\frac{1}{\lambda b} \right)^{\frac{\nu-\sigma}{\nu -2}}
\|\delta_n\|_\lambda^\sigma + o(1),
\end{multline*} where we have used \eqref{eq:3.10}. Going back to
\eqref{eq:3.12}, for a suitable positive constant $C$,
\begin{equation}\label{eq:3.13} \int_{|\delta_n| \geq R}
f(x,\delta_n(x)) \delta_n(x) \, dx \leq \left(\frac{C}{\lambda
b}\right)^{\frac{2(\nu-\sigma)}{\sigma(\nu-2)}} \|\delta_n\|_\lambda^2
+o(1).
\end{equation} On the other hand,
\begin{equation} \label{eq:3.14} \int_{|\delta_n| \leq R}
f(x,\delta_n(x)) \delta_n(x) \, dx \leq \int_{|\delta_n|
\leq R} \left( \varepsilon+ C_\varepsilon R^{q-2} \right)
|\delta_n(x)|^2 \, dx \leq \frac{C_\varepsilon R^{q-2}}{\lambda
b} \|\delta_n\|_\lambda^2 + o(1).
\end{equation} Combining now \eqref{eq:3.13} with \eqref{eq:3.14} we
obtain
\begin{multline*} o(1) = D\Phi_\lambda (\delta_n)[\delta_n] =
\|\delta_n\|_\lambda^2 - \int_{\mathbb{R}^N} f(x,\delta_n(x))
\delta_n(x)\, dx - \mu \int_{\mathbb{R}^N} \xi(x) |\delta_n(x)|^p \,
dx \\ \geq \left( 1 - C \left( \frac{1}{\lambda b} -
\left(\frac{1}{\lambda b}
\right)^{\frac{2(\nu-\sigma)}{\sigma(\nu-2)}} \right) \right)
\|\delta_n\|_\lambda^2 +o(1).
\end{multline*} It now suffices to choose $\Lambda>0$ so large that
the last bracket is strictly positive for every $\lambda \geq
\Lambda$, and we deduce that $\delta_n =o(1)$ as $n \to +\infty$.
\end{proof} We can now prove the main result of this section.
\begin{proof}[Proof of Theorem \ref{th:3.1}] First of all, we fix
$\mu_0$ such that $\Phi_\lambda$ has the mountain-pass geometry, see
Lemma \ref{lem:2.4}. Now we can introduce the value
	\[ c_\lambda = \inf_{\gamma \in \Gamma_\lambda} \sup_{t \in
[0,1]} \Phi_\lambda(\gamma(t)),
	\] where $\Gamma_\lambda = \left\{ \gamma \in
C([0,1],\mathscr{H}_\lambda) \mid \gamma(0)=0, \ \gamma(1)=e
\right\}$. By Lemma \ref{lem:3.5} there exists a number $\lambda_0>0$
such that $\Phi_\lambda$ satisfies the Palais-Smale condition at level
$c_\lambda$ for any $\lambda \geq \lambda_0$. Hence a first
solution to equation \eqref{eq:1} arises as a mountain-pass point at
level $c_\lambda>0$.
	
	To construct the second solution, we remark that there always
exists a function $\phi_0 \in \mathscr{H}_\lambda$ such that
$\int_{\mathbb{R}^N} \xi(x) |\phi_0(x)|^p \, dx >0$. On the straight
half-line $t \mapsto t \phi_0$, we have
	\[ \Phi_\lambda(t \phi_0) = \frac{t^2}{2} \|\phi_0\|_\lambda^2
- \int_{\mathbb{R}^N} F(x,t\phi_0(x))\, dx - \frac{\mu t^p}{p}
\int_{\mathbb{R}^N} \xi(x) |\phi_0(x)|^p \, dx.
	\] Since $F$ is non-negative and $1<p<2$, there exists $t_0$
close to zero (without loss of generality we assume that $t <
\varrho$) such that $\Phi_\lambda(t_0 \phi_0)<0$. On the contrary, we
already know that $\Phi_\lambda(u) >0$ if $\|u\|_\lambda =
\varrho$. For
	\[ m_\lambda = \inf \left\{ \Phi_\lambda (u) \mid u \in
\overline{B(0,\varrho)} \right\} <0
	\] there exists a sequence $\{v_n\}_n$ in
$\mathscr{H}_\lambda$ such that $\Phi_\lambda(v_n) \to
m_\lambda<0$. From the previous discussion it is not restrictive to
assume that $v_n$ is far from the boundary of
$\overline{B(0,\varrho)}$. Hence the Ekeland Variational Principle
implies that we may assume without loss of generality that
$D\Phi_\lambda(v_n) =o(1)$ as $n \to +\infty$.
	
	Taking as usual $\lambda$ large and $\mu$ small enough, the
Palais-Smale condition is satified at level $m_\lambda$, so that we
may assume $v_n \to v_0$ strongly in $\mathscr{H}_\lambda$. Then $v_0$
is another solution of \eqref{eq:1} at level $m_\lambda<0$, and proof
is complete.
\end{proof}
\begin{remark}
Theorem \ref{th:3.9} applies to $u$ and $v_0$ as well, so that our solutions are more regular.
\end{remark}

\begin{bibdiv}

\begin{biblist} 
\bib{Adams}{book}{ 
author={Adams, David R.},
author={Hedberg, Lars Inge}, 
title={Function spaces and potential
theory}, 
series={Grundlehren der Mathematischen Wissenschaften
[Fundamental Principles of Mathematical Sciences]}, 
volume={314},
publisher={Springer-Verlag, Berlin}, 
date={1996}, 
pages={xii+366},
isbn={3-540-57060-8}, 
review={\MR{1411441 (97j:46024)}},
doi={10.1007/978-3-662-03282-4}, }

\bib{AmRa}{article}{
	author={Ambrosetti, Antonio},
	author={Rabinowitz, Paul H.},
	title={Dual variational methods in critical point theory and
		applications},
	journal={J. Functional Analysis},
	volume={14},
	date={1973},
	pages={349--381},
	review={\MR{0370183}},
}

\bib{A-15}{article}{ 
author={Ambrosio, V.}, 
title={Periodic solutions
for the non-local operator $(-\Delta + m^2)^s - m^{2s}$ with $m \geq
0$}, 
eprint={arXiv:1510.05808}, }

\bib{A-16-1}{article}{ 
author={Ambrosio, V.}, 
title={Periodic
solutions for a superlinear fractional problem without the
Ambrosetti-Rabinowitz condition}, 
eprint={arXiv:1601.06282}, }

\bib{A-16-2}{article}{ 
author={Ambrosio, V.}, 
title={Ground states
solutions for a non-linear equation involving a pseudo-relativistic
Schr\"odinger operator}, 
eprint={arXiv:1601.06827}, }


\bib{MR0481616}{article}{ 
author={Benci, V.}, 
author={Fortunato, D.},
title={Discreteness conditions of the spectrum of Schr\"odinger
operators}, 
journal={J. Math. Anal. Appl.}, 
volume={64}, 
date={1978},
number={3}, 
pages={695--700}, 
issn={0022-247x}, 
review={\MR{0481616}},
}

\bib{MR0312241}{article}{ 
author={Berger, Melvyn S.},
author={Schechter, Martin}, 
title={Embedding theorems and quasi-linear
elliptic boundary value problems for unbounded domains},
journal={Trans. Amer. Math. Soc.}, 
volume={172}, 
date={1972},
pages={261--278}, 
issn={0002-9947}, 
review={\MR{0312241}}, }

\bib{BrLi}{article}{
   author={Br{\'e}zis, Ha{\"{\i}}m},
   author={Lieb, Elliott},
   title={A relation between pointwise convergence of functions and
   convergence of functionals},
   journal={Proc. Amer. Math. Soc.},
   volume={88},
   date={1983},
   number={3},
   pages={486--490},
   issn={0002-9939},
   review={\MR{699419}},
   doi={10.2307/2044999},
}

\bib{Bucur}{article}{ 
author={Bucur, Claudia}, 
author={Valdinoci,
Enrico}, 
title={Nonlocal diffusion and applications},
eprint={arXiv:1504.08292}, }

\bib{Carmona}{article}{ 
author={Carmona, Ren{\'e}}, 
author={Masters,
Wen Chen}, 
author={Simon, Barry}, 
title={Relativistic Schr\"odinger
operators: asymptotic behavior of the eigenfunctions},
journal={J. Funct. Anal.}, 
volume={91}, 
date={1990},
number={1},
pages={117--142}, issn={0022-1236}, 
review={\MR{1054115 (91i:35139)}},
doi={10.1016/0022-1236(90)90049-Q}, }

\bib{CS-16}{article}{ 
author={Choi, W.}, 
author={Seok, J.},
title={Nonrelativistic limit of standing waves for pseudo-relativistic
nonlinear Schr\"odinger equations}, 
journal={J. Math. Phys.},
volume={57}, 
year={2016}, 
number={2}, 
pages={021510}, }

\bib{CingolaniSecchi1}{article}{ 
author={Cingolani, Silvia},
author={Secchi, Simone}, 
title={Ground states for the
pseudo-relativistic Hartree equation with external potential},
journal={Proc. Roy. Soc. Edinburgh Sect. A}, 
volume={145},
date={2015}, 
number={1}, 
pages={73--90}, 
issn={0308-2105},
review={\MR{3304576}}, 
doi={10.1017/S0308210513000450}, }

\bib{CingolaniSecchi2}{article}{ 
author={Cingolani, Silvia},
author={Secchi, Simone}, 
title={Semiclassical analysis for
pseudorelativistic Hartree equations}, 
journal={J. Differential
Equations}, 
volume={258}, 
date={2015}, 
pages={4156--4179},
doi={10.1016/j.jde.2015.01.029}, }

\bib{CZN1}{article}{ 
author={Coti Zelati, Vittorio}, 
author={Nolasco, Margherita}, 
title={Existence of ground states for nonlinear,
pseudo-relativistic Schr\"odinger equations}, 
journal={Atti
Accad. Naz. Lincei Cl. Sci. Fis. Mat. Natur. Rend. Lincei (9)
Mat. Appl.}, 
volume={22}, 
date={2011}, 
number={1}, 
pages={51--72},
issn={1120-6330}, 
review={\MR{2799908 (2012d:35346)}},
doi={10.4171/RLM/587}, }

\bib{CZN2}{article}{ 
author={Coti Zelati, Vittorio}, 
author={Nolasco,
Margherita}, 
title={Ground states for pseudo-relativistic Hartree
equations of critical type}, 
journal={Rev. Mat. Iberoam.},
volume={29}, 
date={2013}, 
number={4}, 
pages={1421--1436},
issn={0213-2230}, 
review={\MR{3148610}}, 
doi={10.4171/RMI/763}, }

\bib{MR2321894}{article}{ 
author={Ding, Yanheng}, 
author={Szulkin,
Andrzej}, 
title={Bound states for semilinear Schr\"odinger equations
with sign-changing potential}, 
journal={Calc. Var. Partial
Differential Equations}, 
volume={29}, 
date={2007}, 
number={3},
pages={397--419}, 
issn={0944-2669}, 
review={\MR{2321894}},
doi={10.1007/s00526-006-0071-8}, }

\bib{Fall}{article}{ 
author={Fall, M.M.}, 
author={Felli, V.},
title={Unique continuation properties for relativistic Schr\"{o}dinger
operators with a singular potential}, 
eprint={arXiv:1312.6516}, }

\bib{Felmer}{article}{ 
author={Felmer, Patricio}, 
author={Quaas,
Alexander}, 
author={Tan, Jinggang}, 
title={Positive solutions of the
nonlinear Schr\"odinger equation with the fractional Laplacian},
journal={Proc. Roy. Soc. Edinburgh Sect. A}, 
volume={142},
date={2012}, 
number={6}, 
pages={1237--1262}, 
issn={0308-2105},
review={\MR{3002595}}, 
doi={10.1017/S0308210511000746}, }

\bib{FelmerVergara}{article}{ 
author={Felmer, Patricio},
author={Vergara, I.}, 
title={Scalar field equations with non-local
diffusion}, 
journal={NoDEA}, 
date={2015}, 
status={to appear}, }

\bib{Frank}{article}{ 
author={Frank, Rupert L}, 
author={Lenzmann,
Enno}, author={Silvestre, Luis}, 
title={Uniqueness of radial solutions
for the fractional Laplacian}, 
year={2013}, 
eprint={arXiv:1302.2652},
}

\bib{Ikoma}{article}{ 
author={Ikoma, Norihisa}, 
title={Existence of solutions of scalar field equations with fractional operator}, 
year={2016}, 
eprint={arXiv:1603.04006},
}

\bib{Laskin1}{article}{ 
author={Laskin, Nikolai}, 
title={Fractional
quantum mechanics and L\'evy path integrals}, 
journal={Phys. Lett. A},
volume={268}, 
date={2000}, 
number={4-6}, 
pages={298--305},
issn={0375-9601}, 
review={\MR{1755089 (2000m:81097)}},
doi={10.1016/S0375-9601(00)00201-2}, }

\bib{Laskin2}{article}{ 
author={Laskin, Nick}, 
title={Fractional
Schr\"odinger equation}, 
journal={Phys. Rev. E (3)}, 
volume={66},
date={2002}, 
number={5}, 
pages={056108, 7}, 
issn={1539-3755},
review={\MR{1948569 (2003k:81043)}}, 
doi={10.1103/PhysRevE.66.056108},
}

\bib{Lions}{book}{ 
author={Lions, Pierre-Louis}, 
author={Magenes,
Enrico}, 
title={Probl\`emes aux limites non-homog\`enes et
applications}, 
volume={1}, 
publisher={Dunod, Paris}, 
year={1969}, }

\bib{Palatucci}{article}{ 
author={Palatucci, Giampiero},
author={Pisante, Adriano}, 
title={Improved Sobolev embeddings, profile
decomposition, and concentration-compactness for fractional Sobolev
spaces}, 
journal={Calc. Var. Partial Differential Equations},
volume={50}, 
date={2014}, 
number={3-4}, 
pages={799--829},
issn={0944-2669}, 
review={\MR{3216834}},
doi={10.1007/s00526-013-0656-y}, }

\bib{Se}{article}{ author={Secchi, S.}, 
title={On Some Nonlinear
Fractional Equations Involving the Bessel Operator}, 
journal={Journal
of Dynamics and Differential Equations}, 
year={2016}, 
pages={1-21},
doi={10.1007/s10884-016-9521-y}, }

\bib{Stein}{book}{ 
author={Stein, Elias M.}, 
title={Singular integrals
and differentiability properties of functions}, 
series={Princeton
Mathematical Series, No. 30}, 
publisher={Princeton University Press,
Princeton, N.J.}, 
date={1970}, 
pages={xiv+290}, 
review={\MR{0290095(44 \#7280)}}, }

\bib{Stinga}{article}{ 
author={Stinga, Pablo Ra{\'u}l},
author={Torrea, Jos{\'e} Luis}, 
title={Extension problem and Harnack's
inequality for some fractional operators}, 
journal={Comm. Partial
Differential Equations}, 
volume={35}, 
date={2010}, 
number={11},
pages={2092--2122}, 
issn={0360-5302}, 
review={\MR{2754080
(2012c:35456)}}, 
doi={10.1080/03605301003735680}, }

\bib{Strichartz}{article}{ 
author={Strichartz, Robert S.},
title={Analysis of the Laplacian on the complete Riemannian manifold},
journal={J. Funct. Anal.}, 
volume={52}, date={1983}, 
number={1},
pages={48--79}, 
issn={0022-1236}, 
review={\MR{705991 (84m:58138)}},
doi={10.1016/0022-1236(83)90090-3}, }

\bib{Tan}{article}{ 
author={Tan, Jinggang}, 
author={Wang, Ying},
author={Yang, Jianfu}, 
title={Nonlinear fractional field equations},
journal={Nonlinear Anal.}, 
volume={75}, 
date={2012}, 
number={4},
pages={2098--2110}, 
issn={0362-546X}, 
review={\MR{2870902(2012k:35585)}}, 
doi={10.1016/j.na.2011.10.010}, }

\end{biblist}

\end{bibdiv}

\end{document}